\documentclass[10pt,a4paper,reqno]{amsart}
\usepackage{amsmath, amsthm, amsfonts, amssymb, mathrsfs}
\usepackage{graphicx}
\usepackage{mathtools}
\usepackage[update,prepend]{epstopdf} 
\usepackage{color}
\usepackage{xcolor}
\usepackage{soul}
\usepackage{cite}
\usepackage{subcaption}

\numberwithin{equation}{section}

\usepackage{float}
\usepackage{marginnote}
\usepackage{enumerate}
\usepackage{marginnote}
\usepackage{hyperref}
\setcounter{tocdepth}{2}
%

\addtolength{\textwidth}{0.3in}
\addtolength{\oddsidemargin}{-0.15in}
\addtolength{\evensidemargin}{-0.15in}

\newcommand{\ie}{\emph{i.e.}}
\newcommand{\eg}{\emph{e.g.}}

\newcommand{\sgn}{\mathop{\mathrm{sgn}}\nolimits}

\newcommand{\dist}{\mathop{\mathrm{dist}}\nolimits}

\newcommand{\ups}{\upsilon}
\newcommand{\la}{\lambda}
\newcommand{\RR}{{\mathbb{R}}}
\newcommand{\CC}{{\mathbb{C}}}
\newcommand{\ZZ}{{\mathbb{Z}}}

\newcommand{\ii}{{\rm i}}
\newtheorem{Theorem}{Theorem}

\newtheorem{Lemma}{Lemma}
\theoremstyle{definition}

\newtheorem{Remark}{Remark}

\usepackage{color}

\definecolor{DarkBlue}{rgb}{0,0.1,0.7}
\definecolor{DarkGreen}{rgb}{0,0.5,0.1}

\newcommand{\Hm}[1]{\leavevmode{\marginpar{\tiny%
			$\hbox to 0mm{\hspace*{-0.5mm}$\leftarrow$\hss}%
			\vcenter{\vrule depth 0.1mm height 0.1mm width \the\marginparwidth}%
			\hbox to
			0mm{\hss$\rightarrow$\hspace*{-0.5mm}}$\\\relax\raggedright #1}}}

\title[Spectral bounds for non-self-adjoint discrete 
Schr\"odinger operators]
{Spectral enclosures for non-self-adjoint \\discrete 
	Schr\"odinger operators}
\begin{document}
\author{Orif \,O.\ Ibrogimov}
\address[O.~O.~Ibrogimov]{Department of Mathematics, Faculty 
	of Nuclear Sciences and Physical 
	Engineering, Czech Technical University in Prague,
	Trojanova 13, 12000 Praha~2, Czech 
	Republic}
\email{ibrogori@fjfi.cvut.cz}
\author{Franti{\v s}ek {\v S}tampach}
\address[F.~{\v S}tampach]{
	Department of Applied Mathematics, Faculty of Information 
	Technology, Czech Technical University in~Prague, 
	Th{\' a}kurova~9, 160~00 Praha, Czech Republic
}
\email{stampfra@fit.cvut.cz}

\subjclass[2010]{34L15, 47B36, 47A75}
\keywords{Discrete Schr\"{o}dinger operator, Birman-Schwinger principle
	point spectrum, Jacobi matrix.}
\date{\today}
\begin{abstract}
We study location of eigenvalues of one-dimensional 
discrete Schr\"odinger operators with complex $\ell^{p}$-potentials 
for $1\leq p\leq \infty$. In the case of $\ell^{1}$-potentials, the 
derived bound is shown to be optimal. For $p>1$, two different spectral bounds are obtained. The method relies on the Birman--Schwinger principle and various techniques for estimations of the norm of the Birman--Schwinger operator. 
\end{abstract}
\maketitle
%
\section{Introduction and main results}
\noindent 

Recent years have seen a significant development in the spectral theory of
non-self-adjoint operators. A great deal of research aims to
a localization of spectra of differential operators such as Schr{\"o}dinger 
operators~\cite{Abr-Asl-Dav-01,Boegli-CMP-17,Cue-Ken-CPDE17,Dav-Nath02,Enblom-LMP-16,Fan-Kre-Veg-JST18,Frank-BLMS-11,Fra-Sim-JST17,Frank-TAMS-18,Lap-Saf-CMP09,Lee-Seo-JMAA-19,Safr-BLMS10, Fra-Lap-Sei-11, Fan-Kre-Veg-JFA-2018, Hen-Kre-JST2017, Krej-Siegl-JFA2019}, Dirac operators~\cite{Cuenin-IEOT14,Cue-Lap-Tre-14,Fan-Kre-LMP19} and others~\cite{Cuenin-IEOT-17,Ibr-Kre-Lap, Hans-LMP11}. For related recent results obtained in an abstract operator theoretic setting, the reader may also consult~\cite{Beh-Lan-Lot-Roh-JFA-18, Cue-Tre-JMAA16,Dem-Hans-Kat-JFA-09,Dem-Hans-Kat13}.

In contrast to the differential operators, there are almost no works studying similar questions for their discrete analogues, \ie~the difference operators like discrete Schr{\"o}dinger or discrete Dirac operators with complex potentials. The authors are 
only aware of~\cite{Hulko-BMS-17} which is focused on the number rather than the location of eigenvalues for the discrete Schr{\" o}dinger operator with a complex potential and a related work \cite{Kor-Lap-18} for the cubic lattice. Some constrains on the location of the discrete spectrum of semi-infinite complex Jacobi matrices are discussed in~\cite{Ego-Gol-05}.

Our main interest is to bridge this gap focusing first on the location of spectrum of the one-dimensional dicrete Schr{\" o}dinger operator with a complex valued $\ell^{p}$-potential, $1\leq p\leq\infty$, which is the objective of this paper. The one-dimensional Dirac operator with a complex potential is discussed in a separate paper~\cite{Cas-Ibr-Kre-Sta-inprep}.

Let $H_{0}$ be the discrete Laplacian acting on the 
Hilbert space~$\ell^{2}(\ZZ)$ which is the bounded operator 
determined by the equation
\[
H_{0}e_{n}:=e_{n-1}+e_{n+1}, \quad \forall n\in\ZZ,
\]
where $\{e_{n}\}_{n\in\ZZ}$ stands for the standard basis 
of~$\ell^{2}(\ZZ)$. Further, to a given complex 
sequence $\ups=\{\ups_{n}\}_{n\in\ZZ}$, we define the potential~$V$ 
as a diagonal operator 
\[
Ve_{n}:=\ups_{n}e_{n}, \quad \forall n\in\ZZ.
\]
The matrix representation of the discrete Schr\"{o}dinger 
operator $H_V:=H_0+V$ is the doubly-infinite Jacobi matrix
\[
	H_V=\begin{pmatrix}
	\ddots & \ddots & \ddots &        &        & &       \\
	&  1     & \ups_{-1} & 1      &        & &       \\
	&        & 1      & \ups_{0}  & 1      & &     \\
	&        &        &  1     & \ups_{1}  & 1 &     \\       &        &        &        & \ddots & \ddots & \ddots\\
	\end{pmatrix}.
\]
It is well known that the spectrum of the unperturbed operator $H_0$ is absolutely 
continuous and covers the interval $[-2,2]$. 
If the potential $V$ vanishes at infinity, \ie~$\ups_{n}\to0$ as
$n\to\pm\infty$, then $V$ is compact and the essential spectrum 
of the perturbed operator $H_V=H_0+V$ coincides 
with the interval $[-2,2]$. 

The goal of the present paper is to investigate the location of the spectrum of $H_{V}$ with an $\ell^{p}$--potential $V$ for $1\leq p\leq\infty$. Our primary result concerns the location of eigenvalues of $H_{V}$ with an $\ell^{1}$--potential. Moreover, the obtained bound is shown to be optimal; see Theorem~\ref{thm:1-norm}. If the decay of the potential is slower, namely if $v\in\ell^{p}(\ZZ)$ for $1<p\leq\infty$, we derive bounds for the entire spectrum of $H_{V}$; see Theorems~\ref{thm:p-norm} and~\ref{thm:p-norm.2}.

\begin{Theorem}\label{thm:1-norm}
Let $\ups\in\ell^{1}(\ZZ)$. Then 
\begin{equation}\label{eq:1-norm}
	\sigma_{\rm{p}}(H_V)\subset	\Big\{\la\in\CC\setminus(-2,2) \;\big|\;\; |\la^2-4|\leq \|\ups\|^2_{\ell^1(\ZZ)}\Big\}.
\end{equation}
\end{Theorem}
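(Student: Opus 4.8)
The plan is to combine the Birman--Schwinger principle with an explicit computation of the free resolvent kernel and a weighted Schur test. Factorize the potential as $V=AB$, where $A$ and $B$ are the bounded diagonal operators with entries $a_n=\sgn(\ups_n)|\ups_n|^{1/2}$ and $b_n=|\ups_n|^{1/2}$, so that $a_nb_n=\ups_n$ and $|a_n|=|b_n|=|\ups_n|^{1/2}$. Fix $\la\in\CC\setminus[-2,2]$ and suppose $H_V\psi=\la\psi$ for some $\psi\neq0$. Writing $(H_0-\la)\psi=-AB\psi$ and setting $\phi:=B\psi$, one checks that $\phi\neq0$ (otherwise $V\psi=A\phi=0$ and $(H_0-\la)\psi=0$ force $\psi=0$) and that $\phi=-K(\la)\phi$, where $K(\la):=B(H_0-\la)^{-1}A$ is the Birman--Schwinger operator. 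Hence a necessary condition for $\la$ to be an eigenvalue is $\|K(\la)\|\geq1$.

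First I would compute the resolvent kernel of $H_0$. With the Joukowski substitution $\la=z+z^{-1}$, $|z|<1$, the decaying solution of the associated difference equation gives
\[
\big\langle e_m,(H_0-\la)^{-1}e_n\big\rangle=\frac{z^{|m-n|}}{z-z^{-1}},\qquad m,n\in\ZZ.
\]
Since $(z-z^{-1})^2=\la^2-4$ and $|z|<1$, every kernel entry obeys the uniform bound $|\langle e_m,(H_0-\la)^{-1}e_n\rangle|=|z|^{|m-n|}\,|\la^2-4|^{-1/2}\leq|\la^2-4|^{-1/2}$, and the entries of the Birman--Schwinger operator are $K(\la)_{m,n}=|\ups_m|^{1/2}\,z^{|m-n|}(z-z^{-1})^{-1}|\ups_n|^{1/2}$.

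Next I would estimate $\|K(\la)\|$ by a Schur test with the weight $p_n=|\ups_n|^{1/2}$. Using $\sum_{n}|z|^{|m-n|}|\ups_n|\leq\|\ups\|_{\ell^1(\ZZ)}$, which holds because $|z|<1$, one obtains $\sum_n|K(\la)_{m,n}|\,p_n\leq|\la^2-4|^{-1/2}\|\ups\|_{\ell^1(\ZZ)}\,p_m$ and, by symmetry of the kernel, the same bound for the columns. The Schur test then yields
\[
\|K(\la)\|\leq\frac{\|\ups\|_{\ell^1(\ZZ)}}{|\la^2-4|^{1/2}}
\]
(the Hilbert--Schmidt norm gives the identical bound and avoids the positivity requirement on the weight). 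Combined with the necessary condition $\|K(\la)\|\geq1$, this forces $|\la^2-4|\leq\|\ups\|_{\ell^1(\ZZ)}^2$, which is the asserted enclosure for every eigenvalue lying outside $[-2,2]$.

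The remaining, and most delicate, step is to rule out eigenvalues in the open interval $(-2,2)$, where the Birman--Schwinger principle is unavailable because $(-2,2)\subset\sigma(H_0)$. Here I would argue by absence of embedded eigenvalues: for $\la=2\cos\theta$ with $\theta\in(0,\pi)$, the condition $\ups\in\ell^1(\ZZ)$ allows one to construct Jost solutions of $H_V\psi=\la\psi$ with the asymptotics $\psi_n^{\pm}=e^{\pm\ii\theta n}(1+o(1))$ as $n\to+\infty$ via the associated discrete Volterra summation equation, whose convergence uses only $\sum_n|\ups_n|<\infty$ and is therefore insensitive to the non-self-adjointness of $V$. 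These two solutions are linearly independent, so every solution is a nontrivial combination $\alpha\,\psi^+_n+\beta\,\psi^-_n$; such a sequence has positive quadratic mean $|\alpha|^2+|\beta|^2$ and hence cannot lie in $\ell^2$ near $+\infty$. Thus no $\la\in(-2,2)$ is an eigenvalue, and together with the previous paragraph this completes the proof. I expect this last step to be the main obstacle, since it requires control of the Jost construction in the complex-potential setting rather than a direct norm estimate of $K(\la)$.
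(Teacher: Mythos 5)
Your proposal is correct and follows essentially the same route as the paper: the Birman--Schwinger principle combined with the explicit resolvent kernel $k^{|m-n|}/(k-k^{-1})$ to get $\|K(\la)\|\leq\|\ups\|_{\ell^1(\ZZ)}/\sqrt{|\la^2-4|}$ (the paper obtains this bound by Cauchy--Schwarz, which is precisely your Hilbert--Schmidt fallback), and then exclusion of embedded eigenvalues in $(-2,2)$ via Jost solutions with asymptotics $e^{\pm\ii\theta n}$, whose existence for complex $\ell^1$-potentials and Wronskian-based linear independence are exactly the paper's second case.
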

\noindent
The bound in \eqref{eq:1-norm} is sharp in the following sense: For any $Q>0$ and any point~$\la\in\CC\setminus(-2,2)$ which fulfills the equation
\[
 |\la^2-4|=Q^{2},
\]
there exists $\ups\in\ell^{1}(\ZZ)$ such that $Q=\|\ups\|_{\ell^1(\ZZ)}$ and $\lambda$
is an eigenvalue of the corresponding discrete Schr\"{o}dinger operator $H_{V}$, see Section~\ref{Sect.Ex}. This means that every boundary point of the spectral enclosure~\eqref{eq:1-norm}, with the exception of points located in~$(-2,2)$, is an eigenvalue of~$H_V$ for some $\ell^{1}$--potential~$V$. Hence the obtained spectral bound cannot be squeezed any further. 

Clearly, the geometry of the spectral enclosure \eqref{eq:1-norm}
depends on the $\ell^1$-norm of the potential. If $\|\ups\|_{\ell^{1}(\ZZ)}\leq2$, the spectral enclosure consists of two connected components each containing either the point $2$ or $-2$. If $\|\ups\|_{\ell^{1}(\ZZ)}>2$, the spectral enclosure is a connected set containing the entire interval $[-2,2]$. The set is always symmetric with respect to both the real and the imaginary axes, see Figure~\ref{fig:indef} in Section~\ref{sec:plots}.

Our next result provides a spectral estimate in terms of 
the $\ell^p$-norm of the potential. 
In the sequel, for $p\in(1,\infty]$, we denote by $q\in[1,\infty)$ the corresponding H\"older exponent, \ie~$q=p/(p-1)$ if $1<p<\infty$ and 
$q=1$ if $p=\infty$.

\begin{Theorem}\label{thm:p-norm}
Let $1<p\leq\infty$ and let $\ups\in\ell^{p}(\ZZ)$. 
Then 
\begin{equation}\label{eq:p-norm}
		\sigma(H_V)\subset
	\bigg\{k+\frac{1}{k} \;\Big|\;\; k\in\CC\setminus\{0\}, \; |k|\leq 1 
	\,\mbox{ and }\, \bigg|k-\frac{1}{k}\bigg|\bigg(\frac{1-|k|^q}{1+|k|^q}\bigg)^{\!1/q} \leq
	\|\ups\|_{\ell^p(\ZZ)}\bigg\}.
\end{equation}
\end{Theorem}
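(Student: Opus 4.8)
The plan is to use the Birman--Schwinger principle, which is the method announced in the abstract. The key idea is that for $\la\notin[-2,2]$, the number $\la$ is an eigenvalue of $H_V=H_0+V$ if and only if $-1$ is an eigenvalue of the Birman--Schwinger operator built from the resolvent of the free operator $H_0$. First I would write $V=BA$ (or a polar-type factorization $V = B^* A$) where $A$ and $B$ encode $|\ups|^{1/2}$ and the phase of $\ups$, so that $A,B$ are suitable multiplication operators with $\|A\|,\|B\|$ controlled by $\|\ups\|_{\ell^p}$. The Birman--Schwinger operator is then $K(\la)=A(H_0-\la)^{-1}B$, and a necessary condition for $\la\in\sigma_{\mathrm p}(H_V)$ is $\|K(\la)\|\geq 1$. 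The geometry of the enclosure will come from making this norm bound explicit.

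The central computational object is the free resolvent kernel. The plan is to parametrize the spectral variable via the conformal map $\la=k+1/k$ with $|k|\le 1$, $k\neq 0$, which maps the unit disk (minus the origin) onto $\CC\setminus[-2,2]$ in the standard Joukowski fashion; this is exactly why the statement is phrased in terms of $k$. With this substitution one computes the Green's function $G_\la(m,n)=\langle e_m,(H_0-\la)^{-1}e_n\rangle$ explicitly; for the discrete Laplacian on $\ZZ$ it is a geometric kernel of the form $G_\la(m,n)=c(k)\,k^{|m-n|}$ for an explicit constant $c(k)$ involving $k-1/k$. The factor $k-1/k=-\sqrt{\la^2-4}$ reappears here, which is what produces the quantity $|k-1/k|$ on the left-hand side of \eqref{eq:p-norm}.

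Next I would estimate $\|K(\la)\|$ from above using the Schatten/Schur-type machinery appropriate to $\ell^p$-potentials. Writing the kernel of $K(\la)$ as $A_m\,G_\la(m,n)\,B_n$ with $A_m=|\ups_m|^{1/2}$ and so on, and applying a Hölder-type bound (using the conjugate exponents $p$ and $q$), I expect the norm estimate to reduce to controlling a convolution-type operator whose kernel is $c(k)\,k^{|m-n|}$. The summation $\sum_{j\in\ZZ}|k|^{q|j|}=(1+|k|^q)/(1-|k|^q)$ is the source of the factor $\big((1-|k|^q)/(1+|k|^q)\big)^{1/q}$ in \eqref{eq:p-norm}: after the Hölder step the relevant $\ell^q$-norm of the geometric sequence $\{k^{|j|}\}_j$ produces precisely $(1+|k|^q)^{1/q}/(1-|k|^q)^{1/q}$, and combining it with $|c(k)|=1/|k-1/k|$ and $\|A\|\|B\|\le\|\ups\|_{\ell^p}$ yields the condition $|k-1/k|\big((1-|k|^q)/(1+|k|^q)\big)^{1/q}\le\|\ups\|_{\ell^p}$ as the negation of $\|K(\la)\|<1$. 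The main obstacle, and the step requiring the most care, is obtaining a \emph{sharp} bound on $\|K(\la)\|$: a crude triangle-inequality estimate would lose the precise geometric constant, so I expect one needs either an interpolation argument (treating the $p=2$ Hilbert--Schmidt case and the $p=\infty$ operator-norm case and interpolating) or a direct application of Young's/Schur's inequality for the convolution kernel, exploiting the fact that the bound on a bi-infinite Toeplitz-type operator with geometric symbol can be computed exactly. Once the sharp norm bound is in hand, contraposition of the Birman--Schwinger principle gives the stated enclosure, noting that the essential spectrum $[-2,2]$ must be added back in separately since the principle only governs the discrete spectrum outside $[-2,2]$.
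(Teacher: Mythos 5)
Your proposal follows essentially the same route as the paper: the Birman--Schwinger principle combined with the Joukowski parametrization $\la=k+1/k$, the explicit resolvent kernel $k^{|m-n|}/(k-k^{-1})$, and a norm estimate on $K(\la)$ via H\"older applied to the geometric sequence, whose $\ell^q$-norm $\bigl((1+|k|^q)/(1-|k|^q)\bigr)^{1/q}$ produces exactly the stated enclosure. The paper in fact gives two proofs---one by a weighted Schur test and one by discrete Young's inequality---which are precisely the two alternatives you identify at the end, so no interpolation argument is needed; your treatment of $[-2,2]$ (corresponding to $|k|=1$) also matches the paper's.
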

Similarly to the continuous setting, see e.g.~\cite{Frank-TAMS-18, Cuenin-JFA-17}, complex	interpolation applied to an appropriate analytic family of Birman--Schwinger type operators yields the following alternative spectral enclosure for the case of $\ell^{p}$--potentials.
\begin{Theorem}\label{thm:p-norm.2}
Let $1<p\leq\infty$ and let $\ups\in\ell^{p}(\ZZ)$. 
Then 
\begin{equation}\label{eq:p-norm.2}
\sigma(H_V)\subset	\Big\{\la\in\CC\;\big|\;\; |\la^2-4|\dist(\la, [-2,2])^{2p-2} \leq \|\ups\|^{2p}_{\ell^p(\ZZ)}  \Big\}.
\end{equation}
\end{Theorem}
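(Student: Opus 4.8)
The plan is to localize $\sigma(H_V)$ through the Birman--Schwinger principle and to obtain the $\ell^p$-estimate by complex interpolation between the two extreme cases $p=1$ and $p=\infty$, exactly as the exponents in \eqref{eq:p-norm.2} suggest. First I would factorize the potential as $V=BA$ with $A=|V|^{1/2}$ and $B=\sgn(V)|V|^{1/2}$, where $\sgn(\ups_n)=\ups_n/|\ups_n|$; since $\ups\in\ell^p(\ZZ)\subset\ell^\infty(\ZZ)$, both factors are bounded. For $\la\notin[-2,2]=\sigma(H_0)$ the resolvent $R_0(\la)=(H_0-\la)^{-1}$ is explicit: writing $\la=k+k^{-1}$ with $0<|k|<1$, its matrix elements are $\langle e_m,R_0(\la)e_n\rangle=k^{|m-n|}/(k-k^{-1})$. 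Introducing the Birman--Schwinger operator $K(\la):=A\,R_0(\la)\,B$, the standard equivalence of the nonzero spectra of $A R_0 B$ and $R_0 BA=R_0 V$ shows that $H_V-\la=(H_0-\la)(I+R_0(\la)V)$ is boundedly invertible whenever $\|K(\la)\|<1$; hence $\{\la\notin[-2,2]:\|K(\la)\|\ge1\}$ contains $\sigma(H_V)\setminus[-2,2]$. Since $\dist(\la,[-2,2])=0$ on $[-2,2]$, that interval lies trivially in the right-hand side of \eqref{eq:p-norm.2}, so it remains only to bound $\|K(\la)\|$ from above for $\la\notin[-2,2]$.

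The two endpoint bounds are immediate and fix the geometry. Using $\la^2-4=(k-k^{-1})^2$, hence $|k-k^{-1}|=|\la^2-4|^{1/2}$, the Hilbert--Schmidt norm gives $\|K(\la)\|\le\|K(\la)\|_{\mathrm{HS}}\le\|\ups\|_{\ell^1(\ZZ)}/|\la^2-4|^{1/2}$ (this is the $p=1$ estimate underlying Theorem~\ref{thm:1-norm}), while self-adjointness of $H_0$ yields $\|K(\la)\|\le\|V\|\,\|R_0(\la)\|=\|\ups\|_{\ell^\infty(\ZZ)}/\dist(\la,[-2,2])$. Geometric interpolation of these two bounds with weights $1-\theta$ and $\theta$, where $\theta:=1-1/p=1/q$, is precisely what produces the exponents $1/(2p)$ on $|\la^2-4|$ and $1-1/p$ on $\dist(\la,[-2,2])$.

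To make the interpolation rigorous I would normalize by the $\ell^p$-mass. Set $w_n:=|\ups_n|^p/\|\ups\|_{\ell^p(\ZZ)}^p$, so that $\sum_n w_n=1$, let $W:=\operatorname{diag}(w_n)$, and note $|V|^{1/2}=\|\ups\|_{\ell^p(\ZZ)}^{1/2}W^{1/(2p)}$. For $z$ in the closed strip $\overline{S}=\{0\le\operatorname{Re}z\le1\}$ define the analytic family
\[
\widetilde K_z:=W^{(1-z)/2}\,R_0(\la)\,\sgn(V)\,W^{(1-z)/2},
\]
whose matrix elements $w_m^{(1-z)/2}\,k^{|m-n|}(k-k^{-1})^{-1}\sgn(\ups_n)\,w_n^{(1-z)/2}$ are entire in $z$, so that $z\mapsto\langle g,\widetilde K_z f\rangle$ is entire and bounded for finitely supported $f,g$. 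At $z=\theta$ one has $(1-\theta)/2=1/(2p)$, hence $K(\la)=\|\ups\|_{\ell^p(\ZZ)}\widetilde K_\theta$. The decisive simplification is that on each vertical boundary line the $z$-dependence reduces to a unitary diagonal factor $W^{-\ii\operatorname{Im}z/2}$: on $\operatorname{Re}z=0$ this leaves the Hilbert--Schmidt norm unchanged, giving $\|\widetilde K_{\ii t}\|\le1/|\la^2-4|^{1/2}$ (since $\sum_n w_n=1$ forces $\|W^{1/2}R_0(\la)\sgn(V)W^{1/2}\|_{\mathrm{HS}}\le1/|k-k^{-1}|$), while on $\operatorname{Re}z=1$ it leaves the operator norm unchanged, giving $\|\widetilde K_{1+\ii t}\|\le\|R_0(\la)\|=1/\dist(\la,[-2,2])$. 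The Hadamard three-lines lemma applied to $\langle g,\widetilde K_z f\rangle$ then yields $\|\widetilde K_\theta\|\le|\la^2-4|^{-(1-\theta)/2}\dist(\la,[-2,2])^{-\theta}$, whence
\[
\|K(\la)\|\le\frac{\|\ups\|_{\ell^p(\ZZ)}}{|\la^2-4|^{1/(2p)}\,\dist(\la,[-2,2])^{1-1/p}}.
\]
Combining this with the Birman--Schwinger condition $\|K(\la)\|\ge1$ and raising to the power $2p$ gives \eqref{eq:p-norm.2}; the case $p=\infty$ is the $\operatorname{Re}z=1$ endpoint itself and needs no interpolation.

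I expect the main obstacle to be the careful justification of the interpolation scheme rather than any single estimate: one must verify that the probability-weight normalization $w_n$ indeed renders the family $\widetilde K_z$ uniformly bounded on the whole strip even when $\ups\notin\ell^2(\ZZ)$ (which can happen for $p>2$), that the reduction to a unitary diagonal factor on the two lines is legitimate for the complex powers $W^{(1-z)/2}$ with possibly vanishing entries, and that the algebraic Birman--Schwinger equivalence correctly controls the entire spectrum $\sigma(H_V)$, not merely the point spectrum as in Theorem~\ref{thm:1-norm}. Once these points are settled, reducing the operator-norm interpolation to the scalar three-lines lemma on matrix elements keeps the analytic technicalities (admissible growth in $\operatorname{Im}z$) essentially trivial, since the relevant norms are constant along vertical lines.
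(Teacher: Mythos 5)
Your proof is correct and follows essentially the same route as the paper: the Birman--Schwinger principle combined with complex interpolation of an analytic family of Birman--Schwinger-type operators between the $\ell^1$ (Hilbert--Schmidt) endpoint bound $\|K(\la)\|\leq\|\ups\|_{\ell^1(\ZZ)}/|\la^2-4|^{1/2}$ and the $\ell^\infty$ endpoint bound $\|K(\la)\|\leq\|\ups\|_{\ell^\infty(\ZZ)}/\dist(\la,[-2,2])$. The only differences are cosmetic: the paper invokes Stein's interpolation theorem for the family $T_z=|V|^{zp/2}(H_0-\la)^{-1}|V|^{zp/2}$ with $\theta=1/p$, whereas you run the Hadamard three-lines lemma by hand on a probability-normalized family (with the strip orientation reversed, so $\theta=1-1/p$), which has the added benefit of making the uniform boundedness on the strip and the treatment of the case $\ups\notin\ell^2(\ZZ)$ completely explicit.
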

\begin{Remark}
 For $p=\infty$, the inequality in~\eqref{eq:p-norm.2} has to be understood as 
 \[
  \dist(\la,[-2,2])\leq\|\ups\|_{\ell^{\infty}(\ZZ)}.
 \]
 Note also that, if one sets $p=1$ in~\eqref{eq:p-norm.2}, one arrives at the bound~\eqref{eq:1-norm} without the exclusion of the interval~$(-2,2)$, however.
\end{Remark}

Notice the comparatively simpler form of the spectral enclosure from Theorem~\ref{thm:1-norm} in comparison with the ones of Theorems~\ref{thm:p-norm} and \ref{thm:p-norm.2}. The spectral enclosures from~\eqref{eq:p-norm} and \eqref{eq:p-norm.2} are compact sets which are symmetric with respect to both the real and the imaginary axes. However, in contrast to~\eqref{eq:1-norm}, they are always connected sets containing the entire interval $[-2,2]$. In general, none of the spectral enclosures from~\eqref{eq:p-norm} and \eqref{eq:p-norm.2} is a subset of the other. Naturally, taking their intersection yields the best result. Several plots of boundary curves of these sets as well as their comparison are presented in Section~\ref{sec:plots}.

The methodology used to deduce spectral enclosures of Theorems~\ref{thm:1-norm} and~\ref{thm:p-norm} is by no means new. It relies on the conventional Birman--Schwinger principle similarly as the analogous results for the (continuous) Schr{\" o}dinger and Dirac operators, see \eg~\cite{Frank-BLMS-11,Fan-Kre-Veg-JST18,Cue-Lap-Tre-14,Fan-Kre-LMP19,Fra-Sim-JST17} and also~\cite{Cue-Tre-JMAA16,Ibr-Kre-Lap}. The lastly mentioned papers served as a motivation for the current work. In comparison with the (continuous) Schr{\" o}dinger operators, where the spectral enclosure is a disk centered at the origin, the spectral enclosures from Theorems~\ref{thm:1-norm}, \ref{thm:p-norm} and \ref{thm:p-norm.2} have more interesting geometry. On the other hand, to estimate the norm of the Birman--Schwinger operator is technically less demanding in the discrete setting. Namely, it is done by elementary means in the case of $\ell^{1}$--potentials, it makes use of either the Schur test or discrete Young's inequality to deduce~\eqref{eq:p-norm}, or it employs a very particular form of Stein's complex interpolation to obtain~\eqref{eq:p-norm.2}.

The outline of the paper is as follows. In Section 2, the Birman--Schwinger principle is briefly recalled. Section~\ref{sec:proofs} contains proofs of Theorems~\ref{thm:1-norm}, \ref{thm:p-norm}, and~\ref{thm:p-norm.2} with two alternative proofs for Theorem~\ref{thm:p-norm}. In Section~\ref{Sect.Ex}, we discuss an example of the operator $H_{V}$ with a delta potential which demonstrates the optimality of the spectral enclosure~\eqref{eq:1-norm} for $\ell^1$--potentials. Final Section~\ref{sec:plots} is devoted to numerical illustrations of the results of Theorems~\ref{thm:1-norm}, \ref{thm:p-norm}, and~\ref{thm:p-norm.2}.

\section{The Birman--Schwinger principle}
%
The central role in our analysis is played by the 
Birman--Schwinger operator
\[
	K(\la) := |V|^{1/2} \, (H_0-\la)^{-1} \, V_{1/2}
\]
where $|V|^{1/2}e_{n}=\sqrt{|\ups_{n}|}e_{n}$ and 
$V_{1/2}e_{n}=\sgn(\ups_n)\sqrt{|\ups_{n}|}e_{n}$ with the 
complex signum function $\sgn\colon\CC\to\CC$ defined by
\[
	\sgn{z}=
	\begin{cases}
	\displaystyle
	\frac{z}{|z|} & \quad \mbox{ if } z\neq0,\\
	\;0 & \quad \mbox{ if } z=0.
	\end{cases}
\]
For $\la\in\CC\setminus[-2,2]$, the Birman--Schwinger operator 
is a bounded operator in $\ell^2(\ZZ)$ and its matrix 
elements read  
\begin{equation}\label{mat.elem.BS.op}
	K(\la)_{m,n} = \sqrt{|\ups_m|}\,(H_0-\la)^{-1}_{m,n} 
		\,\sqrt{|\ups_n|}\sgn(\ups_n), \quad \forall m,n\in\ZZ.
\end{equation}
Here the middle term corresponds to the matrix element 
of the resolvent of the discrete Laplace operator~$H_{0}$ 
which is given by
\begin{equation}\label{eq:resolvent_H_0}
	(H_0-\la)^{-1}_{m,n}=\left(H_{0}-k-k^{-1}\right)^{-1}_{m,n}
	=\frac{k^{|m-n|}}{k-k^{-1}}, \quad \forall m,n\in\ZZ,
\end{equation}
where $0<|k|<1$ and $\la=k+k^{-1}$, see~\cite[Chp.~1]{teschl00}. 
Here, it is often convenient to relate the spectral parameter $\la$ 
with $k$ by the Joukowsky transform $\lambda=k+k^{-1}$ which is 
a bijection between the punctured unit disk $0<|k|<1$ and the resolvent set 
$\CC\setminus[-2,2]$ of~$H_{0}$.

For $\la\in\CC\setminus[-2,2]$ and a bounded potential $V$, the  Birman--Schwinger principle is the equivalence
\begin{equation}\label{BS.disc.EV}
	\la\in\sigma(H_V) \quad 
	\Longleftrightarrow 
	\quad -1\in\sigma(K(\la))
\end{equation}
can be easily justified by usual arguments.  In particular, it follows that the implication
\begin{equation}
 \|K(\la)\|<1 \quad\Longrightarrow\quad \lambda\notin\sigma(H_V)
 \label{eq:BS.impl}
\end{equation}
holds true for $\lambda\notin[-2,2]$. Moreover, the implication $\Longrightarrow$ in~\eqref{BS.disc.EV} remains valid even if the spectra are replaced by point spectra. Indeed, if $H_{V}\psi=\lambda\psi$ for some $\lambda\in\CC\setminus[-2,2]$ and $\psi\in\ell^{2}(\ZZ)$, then simple manipulations with the eigenvalue equation yields $K(\la)\phi=-\phi$ for $\phi:=|V|^{1/2}\psi\in\ell^{2}(\ZZ)$ since $|V|^{1/2}$ is bounded by our assumption. Consequently, the Birman--Schwinger principle implies the inclusion
\begin{equation}\label{eq:point.spec.incl.BS}
 \sigma_{p}(H_V)\subset\left\{\la\in\CC\setminus[-2,2] \mid \|K(\lambda)\|\geq1\right\} \cup [-2,2].
\end{equation}

\section{Proofs}\label{sec:proofs}
%
\subsection{Proof of Theorem~\ref{thm:1-norm}}
The proof uses~\eqref{eq:point.spec.incl.BS}. Since the set on the right-hand side of~\eqref{eq:1-norm} always contains points~$\pm2$ we can distinguish the following two cases.

\smallskip
\noindent
\textbf{Case} $\la\notin[-2,2]$.
Let $k\in\CC$ be such that $0<|k|<1$ and $\la=k+k^{-1}$. 
It follows readily from~\eqref{eq:resolvent_H_0} that
\begin{equation}\label{estim.Green.fn}
	\left|(H_{0}-\la)^{-1}_{m,n}\right|<\frac{1}{\left|k-k^{-1}\right|}=\frac{1}{\sqrt{|\la^2-4|}}, \quad \forall m,n\in\ZZ.
\end{equation}
For $\ups\in\ell^{1}(\ZZ)$, the Birman--Schwinger operator $K(\la)$ is Hilbert--Schmidt (even trace class). Using \eqref{estim.Green.fn} and the Cauchy--Schwarz inequality, we can estimate 
\begin{align*}
 \|K(\lambda)\psi\|^{2}_{\ell^{2}(\ZZ)}&\leq\sum_{m\in\ZZ}\left(\sum_{n\in\ZZ}\sqrt{|\ups_{m}|}\left|(H_{0}-\la)^{-1}_{m,n}\right|\sqrt{|\ups_{n}|}|\psi_{n}|\right)^{\!2}\\
 &\leq\frac{\|\ups\|_{\ell^{1}(\ZZ)}}{|\lambda^{2}-4|}\left(\sum_{m\in\ZZ}\sqrt{|\ups_{n}|}|\psi_{n}|\right)^{\!2}\leq \frac{\|\ups\|^{2}_{\ell^{1}(\ZZ)}}{|\lambda^{2}-4|}\|\psi\|^{2}_{\ell^{2}(\ZZ)},
\end{align*}
for any~$\psi\in\ell^{2}(\ZZ)$. Hence
\begin{equation}\label{op.nor.bd.p=1}
	\|K(\la)\|\leq \frac{\|\ups\|_{\ell^{1}(\ZZ)}}{\sqrt{|\la^2-4|}}
\end{equation}
and, according to~\eqref{eq:point.spec.incl.BS}, if $\lambda\in\sigma_{p}(H_V)$, then 
\begin{equation}\label{EV.bd.prf}
	|\la^2-4|\leq \|\ups\|^2_{\ell^{1}(\ZZ)}.
\end{equation}

\smallskip
\noindent
\textbf{Case} $\la\in(-2,2)$. We make use of the fact that assumption $\ups\in\ell^{1}(\ZZ)$ actually implies $(-2,2)\cap\sigma_{p}(H_V)=\emptyset$. This seems to be a commonly known result but we did not find an exact reference for this claim concerning doubly-infinite Jacobi matrices with complex entries. Nevertheless, it follows readily from the existence of a particular solution $\phi(k)=\phi_{n}(k)$ of the three-term recurrence
\begin{equation}
 \phi_{n+1}+\left(\ups_{n}-k-k^{-1}\right)\phi_{n}+\phi_{n-1}=0
\label{eq:recur_evl_eq}
\end{equation}
which satisfies
\begin{equation}
 \lim_{n\to\infty}k^{-n}\phi_{n}(k)=1, \quad \forall k\in\CC,\ |k|\leq1, \; k\neq\pm1,
\label{eq:lim_jost}
\end{equation}
provided that $\ups\in\ell^{1}(\ZZ)$. The solution $\phi(k)$ is referred to as the Jost solution and its existence is proved, for example, in~\cite[Sec.~13.6]{Simon-OPUC2} for real valued~$\ups$. However, the reality of the potential is of no importance for the proof and hence the statement can be extended to complex valued~$\ups$ as well.

If $\lambda\in(-2,2)$ and $k+k^{-1}=\lambda$, then $|k|=1$ but $k\neq\pm1$. Consequently, the Jost solution~$\phi(k)$ exists. Moreover, the second linearly independent solution of~\eqref{eq:recur_evl_eq} can be chosen as $\phi(\bar{k})$
since the Wronskian
\[
 W(\phi(k),\phi(\bar{k}))=\lim_{n\to\infty}\bigl(\phi_{n}(k)\phi_{n+1}(\bar{k})-\phi_{n+1}(k)\phi_{n}(\bar{k})\bigr)=2\ii\Im k\neq0,
\]
where we have used~\eqref{eq:lim_jost} and the fact that $\bar{k}=k^{-1}$. Consequently, any solution~$\psi$ of the eigenvalue equation $H_V\psi=\lambda\psi$, for $\lambda\in(-2,2)$, is a linear combination of $\phi(k)$ and $\phi(\bar{k})$ where $\la=k+k^{-1}$ and $|k|=1$. Such~$\psi$ is square summable only if trivial. Thus~$\lambda$ cannot be an eigenvalue of~$H_V$. This completes the proof.
\qed
\subsection{Proof of Theorem~\ref{thm:p-norm}}
Let $k\in\CC$ be such that $0<|k|<1$ and $\la=k+k^{-1}$. 
It follows readily from~\eqref{eq:resolvent_H_0} that
\begin{equation}\label{res.estim.H0}
	|(H_0-\la)^{-1}_{m,n}|\leq \frac{|k|^{|m-n|}}{|k-k^{-1}|}, 
	\quad \forall m,n\in\ZZ.
\end{equation}

There are at least two proofs of Theorem~\ref{thm:p-norm}. The first one
makes use of a slight modification of the classical Schur test adjusted to our needs. The classical Schur test can be found in~\cite[Exer.~9, Chp.~II]{Conway-90b}.
\begin{Lemma}[Schur test]
 Let $K$ be a bounded operator on~$\ell^{2}(\ZZ)$ with matrix entries with respect to the standard basis denoted by $K_{m,n}$ for $m,n\in\ZZ$. Suppose that there is a set $\mathcal{Z}\subset\ZZ$ such that $K_{m,n}=0$ whenever $m\in\mathcal{Z}$ or $n\in\mathcal{Z}$. Then, for arbitrary weights $p_{j}>0$, $j\in\mathcal{I}:=\ZZ\setminus\mathcal{Z}$, one has
 \[
  \|K\| \leq 
	\bigg(\sup_{m\in I}\frac{1}{p_m}\sum_{n\in\mathcal{I}}p_n|K_{m,n}|\bigg)^{1/2}
	\bigg(\sup_{n\in I}\frac{1}{p_n}\sum_{m\in\mathcal{I}}p_m|K_{m,n}|\bigg)^{1/2}.
 \]
\end{Lemma}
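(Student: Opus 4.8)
The plan is to verify the operator-norm bound $\|K\|\leq (C_1C_2)^{1/2}$, where I abbreviate the two suprema appearing on the right-hand side as $C_1:=\sup_{m\in\mathcal{I}}p_m^{-1}\sum_{n\in\mathcal{I}}p_n|K_{m,n}|$ (the weighted row sums) and $C_2:=\sup_{n\in\mathcal{I}}p_n^{-1}\sum_{m\in\mathcal{I}}p_m|K_{m,n}|$ (the weighted column sums). Since $K_{m,n}$ vanishes whenever $m\in\mathcal{Z}$ or $n\in\mathcal{Z}$, every sum over $\ZZ$ collapses to a sum over $\mathcal{I}$, so the weights $p_j$ are only ever needed for $j\in\mathcal{I}$ and the image $K\psi$ is supported on $\mathcal{I}$. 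I would therefore fix an arbitrary $\psi\in\ell^{2}(\ZZ)$ and estimate $\|K\psi\|^{2}_{\ell^{2}(\ZZ)}=\sum_{m\in\mathcal{I}}|(K\psi)_m|^{2}$ from above by $C_1C_2\|\psi\|^{2}_{\ell^{2}(\ZZ)}$.

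The heart of the argument is a single application of the Cauchy--Schwarz inequality carried out with a symmetric splitting of the weights. First I would bound $|(K\psi)_m|\leq\sum_{n\in\mathcal{I}}|K_{m,n}|\,|\psi_n|$ and factor each summand as
\[
|K_{m,n}|\,|\psi_n|=\bigl(|K_{m,n}|\,p_n/p_m\bigr)^{1/2}\bigl(|K_{m,n}|\,p_m/p_n\bigr)^{1/2}|\psi_n|.
\]
Applying Cauchy--Schwarz to the two bracketed factors and recognising the first resulting sum as $p_m^{-1}\sum_{n\in\mathcal{I}}p_n|K_{m,n}|\leq C_1$ yields the pointwise bound $|(K\psi)_m|^{2}\leq C_1\sum_{n\in\mathcal{I}}|K_{m,n}|\,(p_m/p_n)|\psi_n|^{2}$.

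It then remains to sum over $m\in\mathcal{I}$ and interchange the order of summation, which is justified because all terms are non-negative. Doing so gives $\|K\psi\|^{2}\leq C_1\sum_{n\in\mathcal{I}}p_n^{-1}|\psi_n|^{2}\sum_{m\in\mathcal{I}}p_m|K_{m,n}|$, and bounding the inner column sum by $C_2$ produces $\|K\psi\|^{2}\leq C_1C_2\|\psi\|^{2}$, from which the claimed estimate follows by taking the supremum over unit vectors.

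Because every step is an elementary inequality, I expect no serious obstacle; the only point requiring a little care is the bookkeeping forced by the excluded set $\mathcal{Z}$. One must check that restricting all index sums to $\mathcal{I}$ discards no nonzero contribution---which is exactly what the hypothesis $K_{m,n}=0$ for $m\in\mathcal{Z}$ or $n\in\mathcal{Z}$ guarantees---and that the weighted splitting above invokes only the weights $p_j$ with $j\in\mathcal{I}$, so that the (possibly undefined) weights on $\mathcal{Z}$ never enter. With these observations the classical Schur test argument goes through verbatim in the present weighted form.
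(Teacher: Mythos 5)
Your proof is correct. Note, however, that the paper itself offers no proof of this lemma: it simply cites the classical Schur test (Conway, \emph{A Course in Functional Analysis}, Exercise~9, Chapter~II) and remarks that the variant with the excluded index set $\mathcal{Z}$ is a ``slight modification'' of it. What you have written is exactly the standard argument behind that classical result --- the symmetric splitting $|K_{m,n}|\,|\psi_n|=\bigl(|K_{m,n}|\,p_n/p_m\bigr)^{1/2}\bigl(|K_{m,n}|\,p_m/p_n\bigr)^{1/2}|\psi_n|$, one application of Cauchy--Schwarz, and Tonelli to swap the order of summation --- carried out with the bookkeeping needed to confirm that only weights $p_j$ with $j\in\mathcal{I}$ ever appear, which is the one point where the statement differs from the textbook version. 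So your proposal fills in precisely the detail the paper delegates to the reference, and it does so correctly; the only (harmless) case worth mentioning explicitly is that if either supremum $C_1$ or $C_2$ is infinite the claimed bound is vacuous, so one may assume both are finite, and then the non-negativity of all terms justifies the interchange of sums exactly as you say.
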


The second proof applies discrete Young's inequality which can be proved by a simple modification of the proof of classical Young's inequality~\cite[Thm.~4.2]{Lie-Los01}. 

\begin{Theorem}[discrete Young's inequality]
 Let $p,q,r \geq 1$ be such that  
\[
 \frac{1}{p}+\frac{1}{q}+\frac{1}{r}=2.
\]
Then, for any $f\in\ell^{p}(\ZZ)$, $g\in\ell^{q}(\ZZ)$, and $h\in\ell^{r}(\ZZ)$, one has
\begin{equation}
 \sum_{i,j\in\ZZ}|f_{i}||g_{j-i}||h_{j}|\leq \|f\|_{p}\|g\|_{q}\|h\|_{r}.
 \label{eq:young}
\end{equation}
\end{Theorem}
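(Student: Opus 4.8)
The plan is to mirror the classical proof of Young's convolution inequality recorded in~\cite[Thm.~4.2]{Lie-Los01}, simply replacing integrals over~$\RR$ by sums over~$\ZZ$; since the counting measure on~$\ZZ$ is translation invariant, which is all the argument genuinely uses, nothing of substance changes. The engine is a single application of the generalized H\"older inequality to a cleverly factorized summand.

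First I would introduce the threefold conjugate exponents $p',q',r'\in[1,\infty]$ defined by $1/p'=1-1/p$, $1/q'=1-1/q$, and $1/r'=1-1/r$. The hypothesis $1/p+1/q+1/r=2$ then translates into $1/p'+1/q'+1/r'=1$, so that $p',q',r'$ are admissible exponents for a threefold H\"older estimate. Using the resulting identities $1/q'+1/r'=1/p$, $1/p'+1/r'=1/q$, and $1/p'+1/q'=1/r$, one checks that the summand factorizes as
\[
|f_i|\,|g_{j-i}|\,|h_j| = \Bigl(|f_i|^{p}|g_{j-i}|^{q}\Bigr)^{1/r'}\Bigl(|g_{j-i}|^{q}|h_j|^{r}\Bigr)^{1/p'}\Bigl(|h_j|^{r}|f_i|^{p}\Bigr)^{1/q'},
\]
where the total exponent carried by each of $|f_i|$, $|g_{j-i}|$, $|h_j|$ collapses to~$1$ precisely because of these three identities.

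Next I would apply the generalized H\"older inequality with exponents $r',p',q'$ to the double sum over $(i,j)\in\ZZ^{2}$, bounding $\sum_{i,j}|f_i||g_{j-i}||h_j|$ by the product
\[
\Bigl(\sum_{i,j}|f_i|^{p}|g_{j-i}|^{q}\Bigr)^{1/r'}\!\Bigl(\sum_{i,j}|g_{j-i}|^{q}|h_j|^{r}\Bigr)^{1/p'}\!\Bigl(\sum_{i,j}|h_j|^{r}|f_i|^{p}\Bigr)^{1/q'}.
\]
Each inner double sum separates: by translation invariance of the sum over~$j\in\ZZ$ one has $\sum_{i,j}|f_i|^{p}|g_{j-i}|^{q}=\|f\|_p^{p}\|g\|_q^{q}$, and the remaining two sums equal $\|g\|_q^{q}\|h\|_r^{r}$ and $\|h\|_r^{r}\|f\|_p^{p}$ by the same device. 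Substituting and collecting powers, the exponent of $\|f\|_p$ becomes $p/r'+p/q'=p(1/r'+1/q')=1$, and likewise those of $\|g\|_q$ and $\|h\|_r$ both reduce to~$1$, which yields exactly $\|f\|_p\|g\|_q\|h\|_r$ and hence~\eqref{eq:young}.

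The only genuinely delicate point is the endpoint bookkeeping when one (or more) of $p,q,r$ equals~$1$, so that the corresponding conjugate exponent is~$\infty$ and the factor it governs carries the power~$0$. In that regime the factorization and the H\"older step must be read with the usual conventions, an $\infty$ exponent producing an essential-supremum factor and the power $1/\infty=0$ suppressing the factor it sits on, and one should confirm that the separated sums remain finite; this is routine but is the one place where care is needed. Everything else is pure exponent arithmetic, and the passage from~$\RR$ to~$\ZZ$ is transparent.
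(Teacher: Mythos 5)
Your proposal is correct and takes essentially the same route the paper intends: the paper does not write out a proof but simply notes that the discrete inequality follows by adapting the classical argument of~\cite[Thm.~4.2]{Lie-Los01}, and your factorization of the summand via the conjugate exponents $p',q',r'$ followed by the threefold H\"older inequality and the translation-invariance splitting of the double sums is exactly that argument transcribed to~$\ZZ$. The exponent bookkeeping checks out (each of $p/r'+p/q'$, $q/r'+q/p'$, $r/p'+r/q'$ collapses to~$1$), and your noted care at the endpoint cases $p,q,r=1$ is handled by the standard conventions, so nothing is missing.
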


\begin{proof}[The first proof of Theorem~\ref{thm:p-norm}]
We apply the Schur test to the Birman--Schwinger 
operator $K(\lambda)$ defined in~\eqref{mat.elem.BS.op}. Since $K_{m,n}(\lambda)=0$ whenever $\ups_{n}=0$ or $\ups_{n}=0$, we put 
$\mathcal{I}:=\{j\in\ZZ \mid \ups_j\neq0\}$ and the positive weights are chosen as $p_j:=\sqrt{|\ups_j|}$ for $j\in\mathcal{I}$. Then, taking the definition~\eqref{mat.elem.BS.op} and~\eqref{res.estim.H0} into account, we obtain the estimate 
\begin{equation}\label{Estim.BS.op.lp}
	\|K(\la)\| \leq 
	\frac{1}{|k-k^{-1}|} \sup_{m\in\mathcal{I}}\sum_{n\in\ZZ} |\ups_n| |k|^{|m-n|}.
\end{equation}
For the case $p=\infty$, we infer from \eqref{Estim.BS.op.lp} that
\begin{equation}\label{Estim.BS.p=infty}
	\|K(\la)\| \leq 
	\frac{\|\ups\|_{\ell^{\infty}(\ZZ)}}{|k-k^{-1}|} \sum_{n\in\ZZ} |k|^{|n|}
	=\frac{\|\ups\|_{\ell^{\infty}(\ZZ)}}{|k-k^{-1}|}\frac{1+|k|}{1-|k|}.
\end{equation}
Now consider the case $p\in(1,\infty)$. 
For every fixed $m\in I$, H\"older's inequality yields
\begin{equation}
	\sum_{n\in\ZZ} |\ups_n| |k|^{|m-n|} 
	\leq 
	\|\ups\|_{\ell^p(\ZZ)}	\bigg(\sum_{n\in\ZZ} |k|^{|n|q} \bigg)^{1/q}
	=\|\ups\|_{\ell^p(\ZZ)}\bigg(\frac{1+|k|^q}{1-|k|^q}\bigg)^{1/q}
\end{equation}
and thus
\begin{equation}\label{estim.BS.p-norm}
	\|K(\la)\|\leq\frac{\|\ups\|_{\ell^p(\ZZ)}}{|k-k^{-1}|}\bigg(\frac{1+|k|^q}{1-|k|^q}\bigg)^{1/q}.
\end{equation}
In view of the estimates \eqref{Estim.BS.p=infty} and \eqref{estim.BS.p-norm}, 
the Birman--Schwinger principle \eqref{BS.disc.EV} thus implies that 
$\la=k+k^{-1}\in\CC\setminus[-2,2]$ 
cannot belong to the spectrum of $H_V$ unless it holds that 
\begin{equation}\label{EV.bd.lp.prf}
	\|\ups\|_{\ell^p(\ZZ)}
	\geq \left|k-\frac{1}{k}\right|\bigg(\frac{1-|k|^q}{1+|k|^q}\bigg)^{1/q}.
\end{equation}
Finally, noticing that also the interval $[-2,2]$ is always included in the set on the right-hand side of~\eqref{eq:p-norm}, corresponding to the case when $|k|=1$, the proof is completed.
\end{proof}

\begin{proof}[The second proof of Theorem~\ref{thm:p-norm}]
Suppose $\ups\in\ell^{p}(\ZZ)$ for $1<p\leq\infty$ and $\lambda\in\CC\setminus[-2,2]$ and $k\in\CC$, $0<|k|<1$, such that $\lambda=k+k^{-1}$. For $\phi,\psi\in\ell^{2}(\ZZ)$, we have
\begin{equation}
 \left|\langle\phi,K(\lambda)\psi\rangle\right|\leq\frac{1}{\left|k-k^{-1}\right|}\sum_{m,n\in\ZZ}|f_{m}||g_{n-m}||h_{m}|,
 \label{eq:BS_sesq}
\end{equation}
where
\[
 f_{n}:=\sqrt{|\ups_{n}|}\phi_{n}, \quad g_{n}:=|k|^{|n|}, \quad h_{n}:=\sqrt{|\ups_{n}|}\psi_{n}, \quad n\in\ZZ.
\]
Notice that, by H{\" o}lder's inequality, we have
\begin{equation}
 \|f\|_{\ell^{2p/(p+1)}(\ZZ)}\leq\sqrt{\|\ups\|_{\ell^{p}(\ZZ)}}\|\phi\|_{\ell^{2}(\ZZ)}
 \quad\mbox{ and }\quad
 \|h\|_{\ell^{2p/(p+1)}(\ZZ)}\leq\sqrt{\|\ups\|_{\ell^{p}(\ZZ)}}\|\psi\|_{\ell^{2}(\ZZ)}.
\label{eq:f_h_norm_estim}
\end{equation}
Moreover, for any $q\geq1$, it is clearly true that
\begin{equation}
 \|g\|_{\ell^{q}(\ZZ)}=1+2\left(\sum_{n=1}^{\infty}|k|^{qn}\right)^{\!1/q}=\left(\frac{1+|k|^{q}}{1-|k|^{q}}\right)^{\!1/q}.
\label{eq:g_norm}
\end{equation}
Thus, we can apply Young's inequality~\eqref{eq:young} in~\eqref{eq:BS_sesq} with the indices $p$ and $r$ replaced by $2p/(p+1)$ (or by $2$ if $p=\infty$) and $q$ is the H{\" o}lder dual index to~$p$ ($q=1$ if $p=\infty$). Taking also~\eqref{eq:f_h_norm_estim} and~\eqref{eq:g_norm} into account, we obtain the estimate
\[
 \left|\langle\phi,K(\lambda)\psi\rangle\right|\leq\frac{\|\ups\|_{\ell^{p}(\ZZ)}}{\left|k-k^{-1}\right|}\left(\frac{1+|k|^{q}}{1-|k|^{q}}\right)^{\!1/q}\|\phi\|_{\ell^{2}(\ZZ)}\|\psi\|_{\ell^{2}(\ZZ)},
\]
for all $\phi,\psi\in\ell^{2}(\ZZ)$. In other words, we arrived at~\eqref{estim.BS.p-norm} again and the same reasoning as in the previous proof implies the desired inclusion~\eqref{eq:BS.impl}.
\end{proof}
\subsection{Proof of Theorem~\ref{thm:p-norm.2}}
The proof makes use of Stein's complex interpolation theorem, see~\cite[Thm.~1]{Ste-TAMS-1956}. However, only a special version of the theorem is sufficient for our needs. Its statement is as follows.

\begin{Theorem}[Stein's interpolation]\label{thm:stein}
 Let $T_{z}:\ell^{2}(\ZZ)\to\ell^{2}(\ZZ)$ be a family of operators analytic in the strip $0<\Re z<1$ and continuous and uniformly bounded in its closure $0\leq\Re z\leq1$. Suppose further that there exist constants $A_{0}$ and $A_{1}$ such that
 \[
  \|T_{\ii y}\|\leq A_{0} \quad\mbox{ and }\quad \|T_{1+\ii y}\|\leq A_{1},
 \]
 for all $y\in\RR$. Then, for any $\theta\in[0,1]$, one has
 \[
  \|T_{\theta}\|\leq A_{0}^{1-\theta}A_{1}^{\theta}.
 \]
\end{Theorem}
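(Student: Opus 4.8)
The plan is to reduce the operator statement to the classical Hadamard three-lines theorem by testing against unit vectors, exploiting that we work in a Hilbert space. First I would fix $\phi,\psi\in\ell^{2}(\ZZ)$ with $\|\phi\|_{\ell^{2}(\ZZ)}=\|\psi\|_{\ell^{2}(\ZZ)}=1$ and introduce the scalar function $F(z):=\langle\phi,T_{z}\psi\rangle$. The hypotheses on the family $T_{z}$ transfer directly to $F$: analyticity of $z\mapsto T_{z}$ on the open strip $0<\Re z<1$ yields analyticity of $F$ there, while continuity and uniform boundedness of $T_{z}$ on the closed strip give continuity and boundedness of $F$ on $0\le\Re z\le1$. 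On the two boundary lines the assumed norm bounds yield $|F(\ii y)|\le\|T_{\ii y}\|\le A_{0}$ and $|F(1+\ii y)|\le\|T_{1+\ii y}\|\le A_{1}$ for all $y\in\RR$.

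Next I would prove the three-lines estimate $|F(\theta)|\le A_{0}^{1-\theta}A_{1}^{\theta}$. Assuming first $A_{0},A_{1}>0$ (the degenerate case being recovered by replacing $A_{j}$ with $A_{j}+\delta$ and letting $\delta\to0^{+}$), I would normalize by setting
\[
 g(z):=\frac{F(z)}{A_{0}^{1-z}A_{1}^{z}},
\]
where $A_{0}^{1-z}A_{1}^{z}=\exp\!\big((1-z)\log A_{0}+z\log A_{1}\big)$ is entire and zero-free. A direct computation of moduli on the boundary lines --- using that $|A_{0}^{1-z}|=A_{0}$ and $|A_{1}^{z}|=1$ when $\Re z=0$, and $|A_{0}^{1-z}|=1$, $|A_{1}^{z}|=A_{1}$ when $\Re z=1$ --- shows $|g|\le1$ on both lines. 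Since $F$ is bounded and the denominator is bounded away from $0$ on the strip, $g$ is a bounded analytic function there.

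The crux is that the maximum modulus principle cannot be applied directly on the unbounded strip, and this is the step I expect to be the main obstacle. To circumvent it I would insert a Gaussian damping factor and consider
\[
 g_{\eps}(z):=g(z)\,e^{\eps(z^{2}-z)}, \qquad \eps>0.
\]
Writing $z=x+\ii y$ one has $\Re(z^{2}-z)=x^{2}-x-y^{2}\le -y^{2}$ for $0\le x\le1$, so $|g_{\eps}(z)|\le\|g\|_{\infty}\,e^{-\eps y^{2}}\to0$ as $|y|\to\infty$, uniformly in $x$, while $|g_{\eps}|\le1$ on both boundary lines. Choosing $R$ large enough that $\|g\|_{\infty}e^{-\eps R^{2}}\le1$ and applying the ordinary maximum modulus principle on the rectangle $\{\,0\le x\le1,\ |y|\le R\,\}$ gives $|g_{\eps}|\le1$ inside; letting $R\to\infty$ and then $\eps\to0^{+}$ yields $|g|\le1$ throughout the strip. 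In particular $|g(\theta)|\le1$, that is $|F(\theta)|\le A_{0}^{1-\theta}A_{1}^{\theta}$. Finally I would take the supremum over all unit vectors $\phi,\psi$: since $T_{\theta}$ acts on the Hilbert space $\ell^{2}(\ZZ)$, one has $\|T_{\theta}\|=\sup\{\,|\langle\phi,T_{\theta}\psi\rangle| : \|\phi\|_{\ell^{2}(\ZZ)}=\|\psi\|_{\ell^{2}(\ZZ)}=1\,\}\le A_{0}^{1-\theta}A_{1}^{\theta}$, which is the claimed bound.
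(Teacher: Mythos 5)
Your proof is correct, but it is worth noting that the paper does not actually prove this statement at all: it imports it as a special case of Stein's complex interpolation theorem, with the citation to Stein's 1956 paper standing in for a proof. What you have written is a genuine, self-contained proof of exactly the special version stated. Your route --- reducing to the scalar functions $F(z)=\langle\phi,T_{z}\psi\rangle$ (legitimate here because on a Hilbert space $\|T\|=\sup\{|\langle\phi,T\psi\rangle| : \|\phi\|=\|\psi\|=1\}$), then running the Hadamard three-lines argument with the normalization $g=F/(A_{0}^{1-z}A_{1}^{z})$ and the Gaussian damping factor $e^{\eps(z^{2}-z)}$ to tame the unbounded strip --- is the standard proof of the three-lines theorem, and every step checks out: the boundary modulus computations, the estimate $\Re(z^{2}-z)\leq -y^{2}$ for $0\leq\Re z\leq 1$, the maximum modulus principle on large rectangles followed by $R\to\infty$ and $\eps\to 0^{+}$, and the perturbation $A_{j}\mapsto A_{j}+\delta$ to cover vanishing constants. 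What your approach buys is precisely the observation that the paper's special version is much weaker than Stein's theorem: since the bounds $A_{0},A_{1}$ are independent of $y$ and all operators act on the single space $\ell^{2}(\ZZ)$, no interpolation of $L^{p}$-scales and no admissible-growth hypotheses are needed, and the result collapses to an operator-valued three-lines theorem provable by elementary complex analysis. The only point you pass over quickly is the transfer of analyticity from the operator family to $F$; since analyticity of an operator-valued function in any of the usual senses (uniform, strong, or weak) implies weak analyticity, this is harmless.
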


\begin{proof}[Proof of Theorem~\ref{thm:p-norm.2}]
Let $\la\in\CC\setminus[-2,2]$ be fixed. If $p=\infty$, one readily estimates the Birman--Schwinger operator getting
\[
 \|K(\la)\|\leq\||V|^{1/2}|\|\|(H_{0}-\la)^{-1}\|\|V_{1/2}\|\leq\frac{\|\ups\|_{\infty}}{\dist(\la,[-2,2])}.
\]
Then~\eqref{eq:BS.impl} implies~\eqref{eq:p-norm.2} in the particular case $p=\infty$.

Suppose $1<p<\infty$. Define the operator family 
\[
T_{z}:=|V|^{z p/2}(H_0-\la)^{-1}|V|^{z p/2},
\]
for $z\in\CC$ with $0\leq\Re z\leq 1$. Note that $T_{z}$ is continuous in the closed strip $0\leq\Re z\leq 1$ and analytic in its interior. Moreover, estimating similarly as above, we get
\[
 \sup_{0\leq\Re z\leq 1}\|T_{z}\|\leq\frac{\max(1,\|\ups\|_{\infty}^{p})}{\dist(\la,[-2,2])}.
\]
Therefore, $T_{z}$ is uniformly bounded for $0\leq\Re z\leq 1$.

Further, since $\ups\in\ell^p(\ZZ)$ by the hypothesis, we have $|\ups|^{p}\in\ell^1(\ZZ)$. Consequently, we can apply~\eqref{op.nor.bd.p=1} to get
\[
	\|T_{1+\ii y}\|=\||V|^{p/2}(H_{0}-\la)^{-1}|V|^{p/2}\|\leq
	\frac{\|\ups\|^p_{\ell^p(\ZZ)}}{\sqrt{|\la^2-4|}},
\]
for any $y\in\RR$. Moreover, for all $y\in\RR$, we have also the trivial estimate
	\[
	\|T_{\ii y}\|\leq \frac{1}{\dist(\la,[-2,2])}.
	\]
Hence, we may apply Theorem~\ref{thm:stein} with $\theta=1/p$ which yields
\begin{equation*}
\|T_{1/p}\|=\|K(\la)\|\leq
\frac{\|\ups\|_{\ell^p(\ZZ)}}{|\la^2-4|^{\frac{1}{2p}}\dist(\la, [-2,2])^{1-\frac{1}{p}}}
\end{equation*}
and~\eqref{eq:BS.impl} completes the proof.
\end{proof}

%

\section{Optimality of the eigenvalue bound for $\ell^{1}$-potentials}\label{Sect.Ex}
\noindent
In this section, we demonstrate that the result~\eqref{eq:1-norm} is sharp in 
the sense that to any boundary point of the spectral enclosure which is not located in~$[-2,2]$, there exists an $\ell^{1}$--potential~$V$ so that this boundary point is an eigenvalue of $H_{V}$.
The excluded boundary points occur only if $\|\ups\|_{\ell^{1}(\ZZ)|}\leq2$ and they are $\pm\sqrt{4-\|\ups\|_{\ell^{1}(\ZZ)}^{2}}$.

For this purpose, we consider the operator $H_{V}$ with the potential $V$ determined by the sequence
\begin{equation}
\ups_{n}:=\omega\delta_{n,0}, \quad \forall n\in\ZZ,
\label{eq:delta_pot_q}
\end{equation}
where $\omega\in\CC$ is a coupling constant and $\delta_{m,n}$ stands for the Kronecker delta. The eigenvalue equation 
$H_{V}\psi=\lambda\psi$ reads
\[
\psi_{n+1}+\omega\delta_{n,0}\psi_{n}+\psi_{n-1}=\lambda\psi_{n}, \quad n\in\ZZ.
\]
It is convenient to write $\omega=k^{-1}-k$ for $0<|k|\leq1$. Such $k$ exists unique inside the unit disk $|k|<1$ if $\omega\in\CC\setminus[-2\ii,2\ii]$, there are exactly two on the unit circle $|k|=1$, $k\neq\pm\ii$ if $\omega\in(-2\ii,2\ii)$, and $k=\pm\ii$ if $\omega=\mp2\ii$.
In any case, one readily verifies that the vector
\[
\psi_{n}:=k^{|n|}, \quad n\in\ZZ
\]
fulfills the eigenvalue equation
\[
 H_{V}\psi=\left(k+k^{-1}\right)\psi.
\]
If, in addition, $\omega\in\CC\setminus[-2\ii,2\ii]$, then $\psi\in\ell^{2}(\ZZ)$ and hence
\[
 \lambda_{\omega}:=k+k^{-1}=\sqrt{4+\omega^{2}}\in\sigma_{\textrm{p}}(H_{V}).
\]
Obviously, $|\lambda_{\omega}^{2}-4|=|\omega|^{2}=\|q\|_{\ell^{1}(\ZZ)}^{2}$
and thus the eigenvalue
$\lambda_{\omega}$ is a boundary point of the spectral enclosure from~\eqref{eq:1-norm}.
Moreover, let us remark that, in this particular example, the Birman--Schwinger operator takes the simple form
\[
K_{m,n}(\la)=\begin{cases}
\frac{\omega}{k-k^{-1}} & \quad \mbox{ if } m=n=0,\\
0 & \quad \mbox{ otherwise,}
\end{cases}
\]
for $\lambda=k+k^{-1}$ with $0<|k|<1$. Thus the Birman--Schwinger principle \eqref{BS.disc.EV} together with the fact that $\sigma(H_{0})=[-2,2]$ shows
\[
\sigma(H_{V})=[-2,2]\cup\{\lambda_{\omega}\},
\]
where $\lambda_{\omega}\in[-2,2]$ if and only if $\omega\in[-2\ii,2\ii]$, in which case 
$\lambda_{\omega}\notin\sigma_{\rm{p}}(H_{V})$.

On the contrary, it is not difficult to see that to any boundary point~$z\notin[-2,2]$ of the spectral enclosure~\eqref{eq:1-norm} for $H_{V}$ with $\|\ups\|_{\ell^{1}}>0$, there exists $k\in\CC$ with $0<|k|<1$ and such that $z=k+k^{-1}$ and
$
 \left|k-k^{-1}\right|=\|\ups\|_{\ell^{1}}.
$
Hence, if we put $\omega:=k^{-1}-k$, the previous analysis shows that $z$ is an eigenvalue of~$H_{V}$ with the potential~$V$ given by the sequence~\eqref{eq:delta_pot_q}.

\section{Plots of the spectral enclosures from Theorems~\ref{thm:1-norm}, \ref{thm:p-norm}, and \ref{thm:p-norm.2}}\label{sec:plots}
\noindent First Figure~\ref{fig:indef} shows the spectral enclosures from Theorem~\ref{thm:1-norm}.
\begin{figure}[htb!]
	\centering
	\includegraphics[width= 0.8 \textwidth]{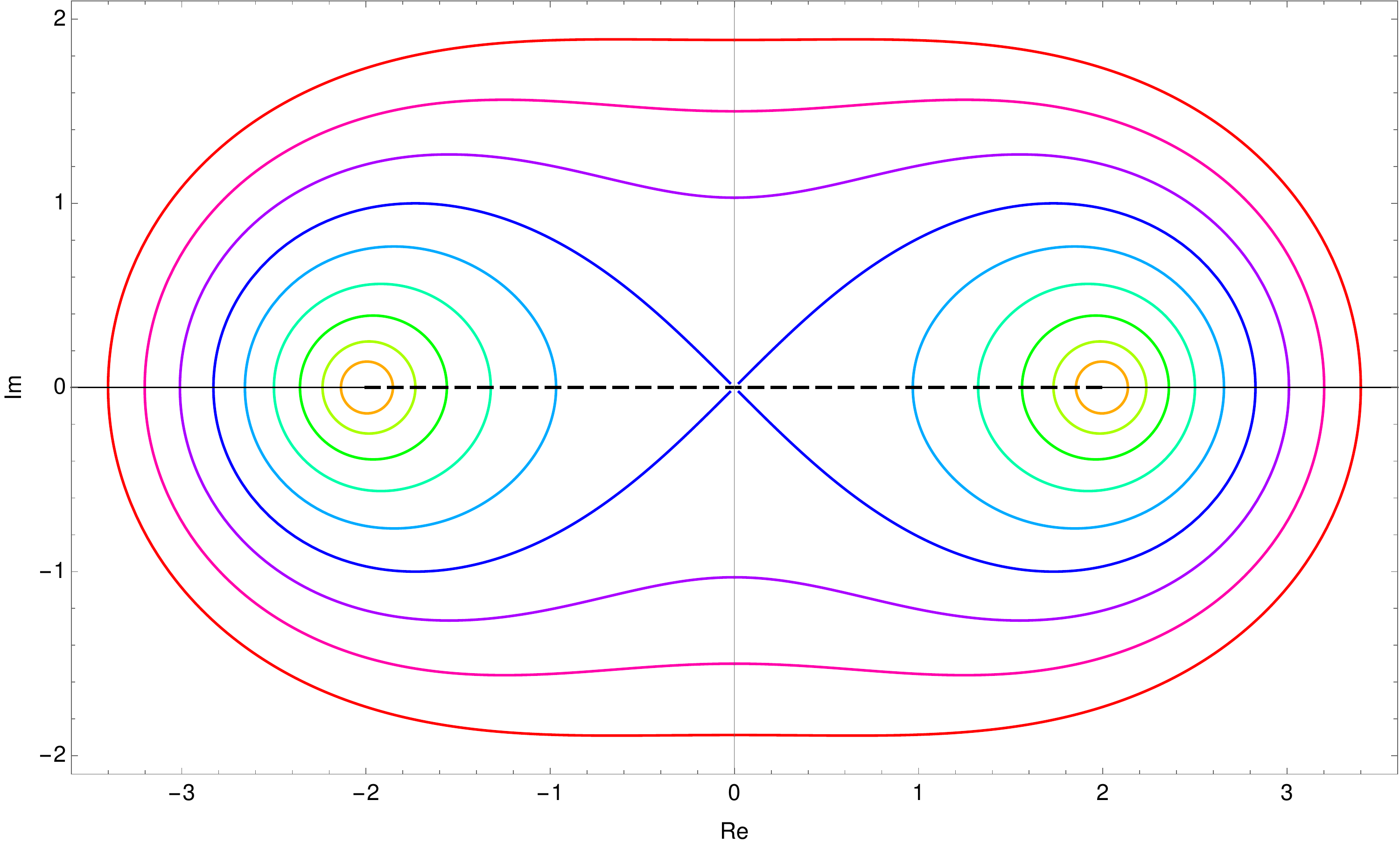}
	\caption{\small{The plots of the expanding boundary curves 
			corresponding to the spectral enclosure~\eqref{eq:1-norm} for $\|\ups\|_{\ell^{1}(\ZZ)}=j/4$, $j=3,\ldots,11$.}}
	\label{fig:indef}
\end{figure}

Second, we provide several plots as an illustration of the spectral enclosures from Theorem~\ref{thm:p-norm} in Figure~\ref{fig:sp_thm2}. Denoting by $Q:=\|v\|_{\ell^{p}(\ZZ)}$, the set in~\eqref{eq:p-norm} is determined by two parameters $Q\geq0$ and $q\geq1$. Without going into details, we remark that the boundary curve $\Gamma_{q,Q}$ given by the equation
\[
 \bigg|k-\frac{1}{k}\bigg|\bigg(\frac{1-|k|^q}{1+|k|^q}\bigg)^{1/q}=Q
\]
for $0<|k|<1$, can be parametrized in the first quadrant ($\Re z\ge0$ and $\Im z\geq0$) as
\begin{align*}
 \Re \Gamma_{q,Q}(t)&=2\cosh(t)\sqrt{\cosh^{2}(t)-\frac{Q^{2}}{4}\coth^{\frac{2}{q}}\!\left(\frac{qt}{2}\right)},\\
 \Im \Gamma_{q,Q}(t)&=2\sinh(t)\sqrt{-\sinh^{2}(t)+\frac{Q^{2}}{4}\coth^{\frac{2}{q}}\!\left(\frac{qt}{2}\right)}, 
\end{align*}
for $t_{\min}\leq t \leq t_{\max}$, where $t_{\min}$ and $t_{\max}$ are the unique solutions of the equations
\[\cosh^{2}(t)=\frac{Q^{2}}{4}\coth^{\frac{2}{q}}\!\left(\frac{qt}{2}\right), 
\quad\mbox{ and }\quad \sinh^{2}(t)=\frac{Q^{2}}{4}\coth^{\frac{2}{q}}\!\left(\frac{qt}{2}\right),
\]
for $t>0$, respectively. This parametrization is used in the plots in Figure~\ref{fig:sp_thm2}.

Third set of plots shows the spectral enclosures from Theorem~\ref{thm:p-norm.2}, see Figure~\ref{fig:sp_thm3}. Finally, we compare the spectral enclosures from Theorems~\ref{thm:p-norm} and~\ref{thm:p-norm.2} for six values of~$p$ in Figure~\ref{fig:compar}. The plots indicate that, for general $p>1$, no spectral enclosure is a subset of the other one.

\begin{figure}[H]
    \centering
    \begin{subfigure}[c]{0.49\textwidth}
	\includegraphics[width=\textwidth]{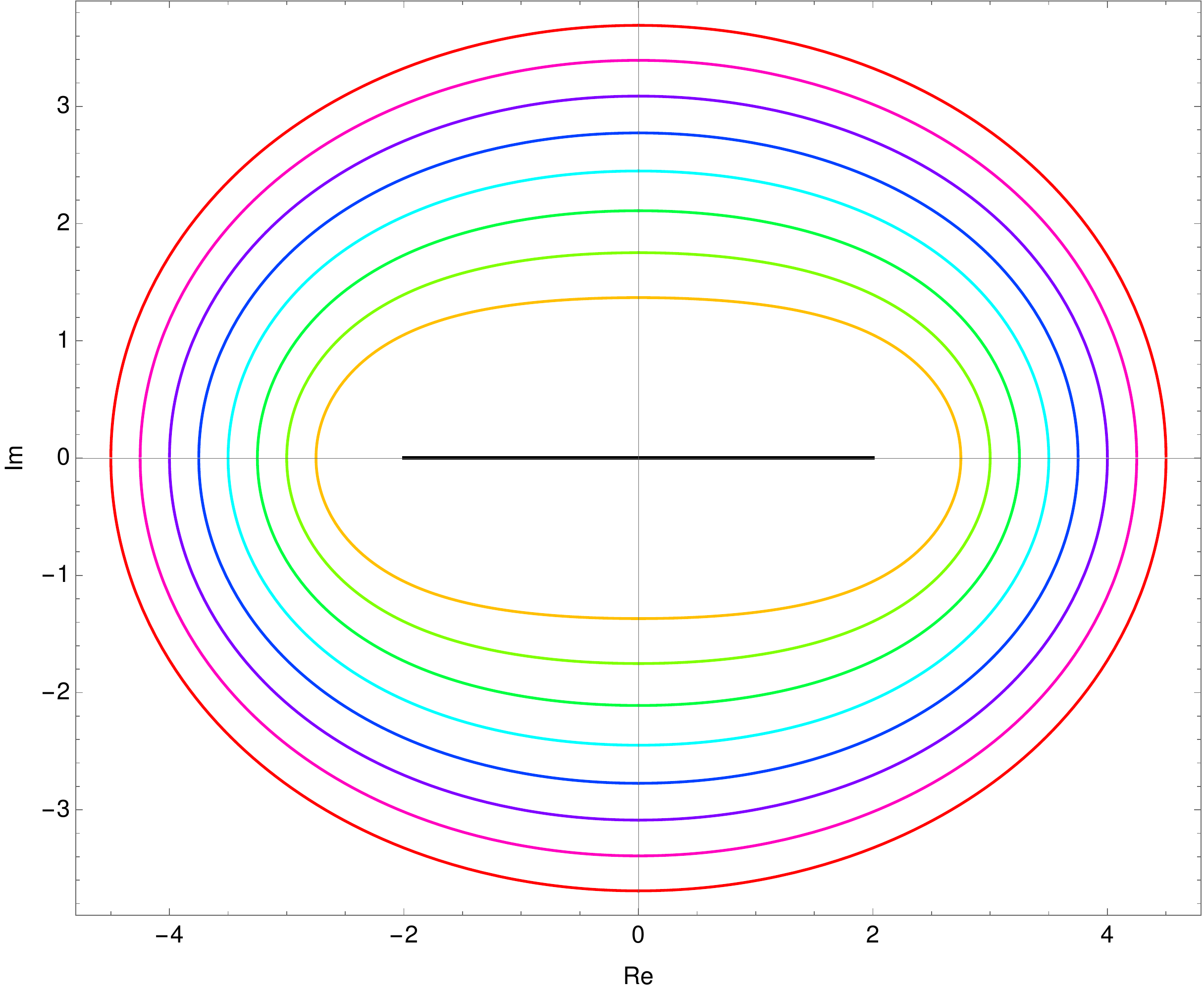}
    \caption{$q=1$}
    \end{subfigure}
    \begin{subfigure}[c]{0.49\textwidth}
	\includegraphics[width=\textwidth]{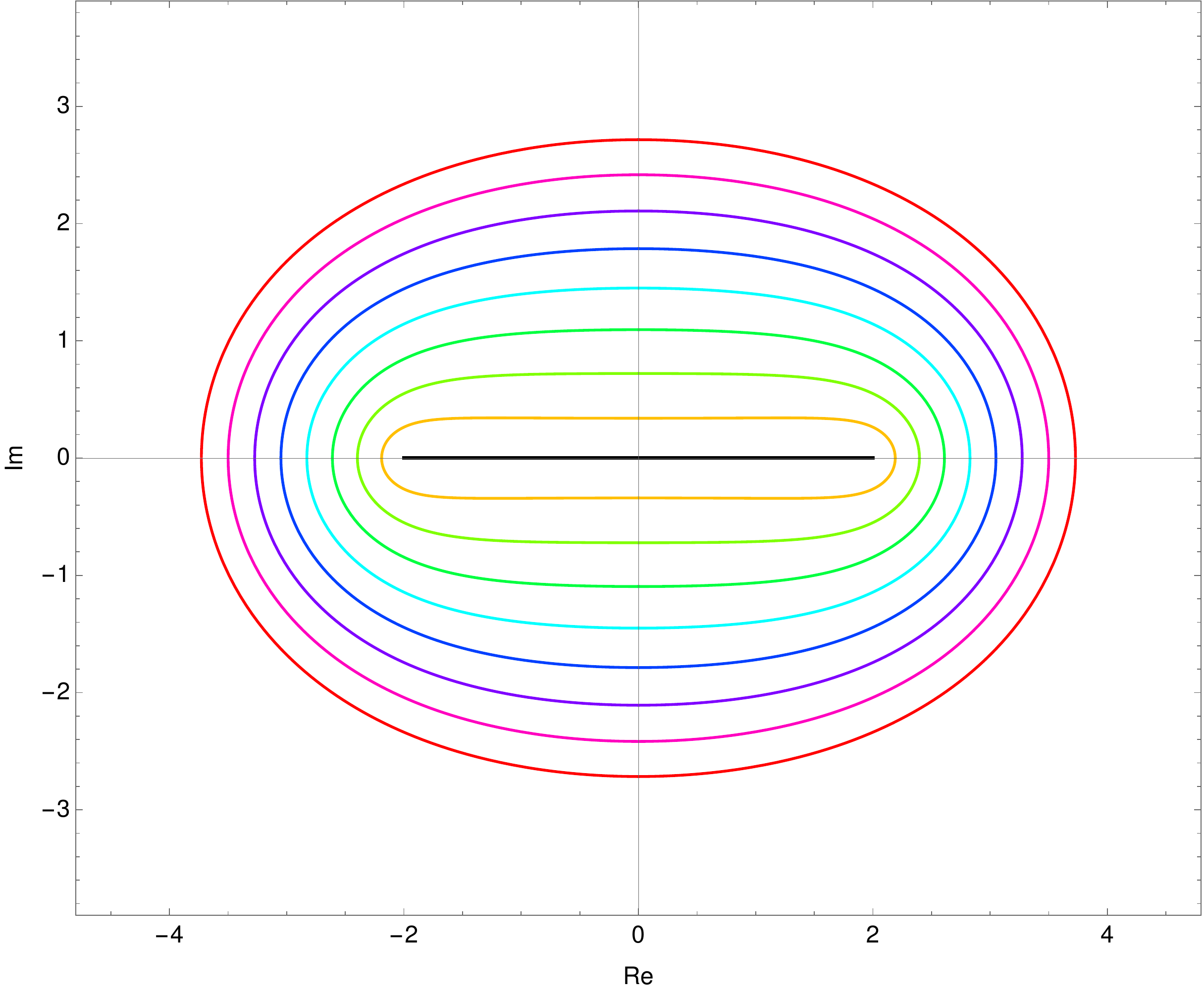}
	\caption{$q=9/8$}
    \end{subfigure}
    \vskip6pt
    \begin{subfigure}[c]{0.49\textwidth}
	\includegraphics[width=\textwidth]{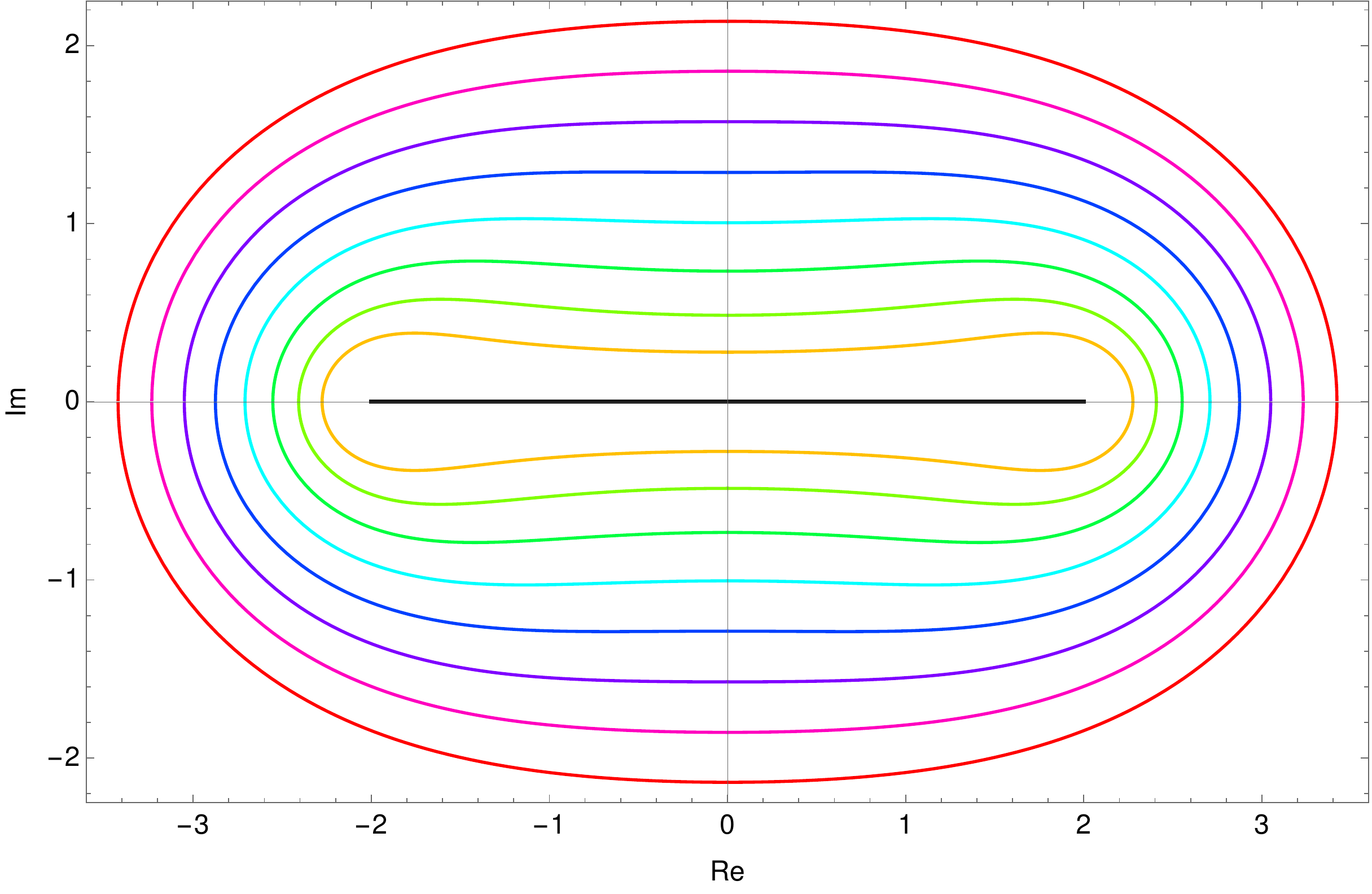}
	\caption{$q=2$}
    \end{subfigure}
    \begin{subfigure}[c]{0.49\textwidth}
	\includegraphics[width=\textwidth]{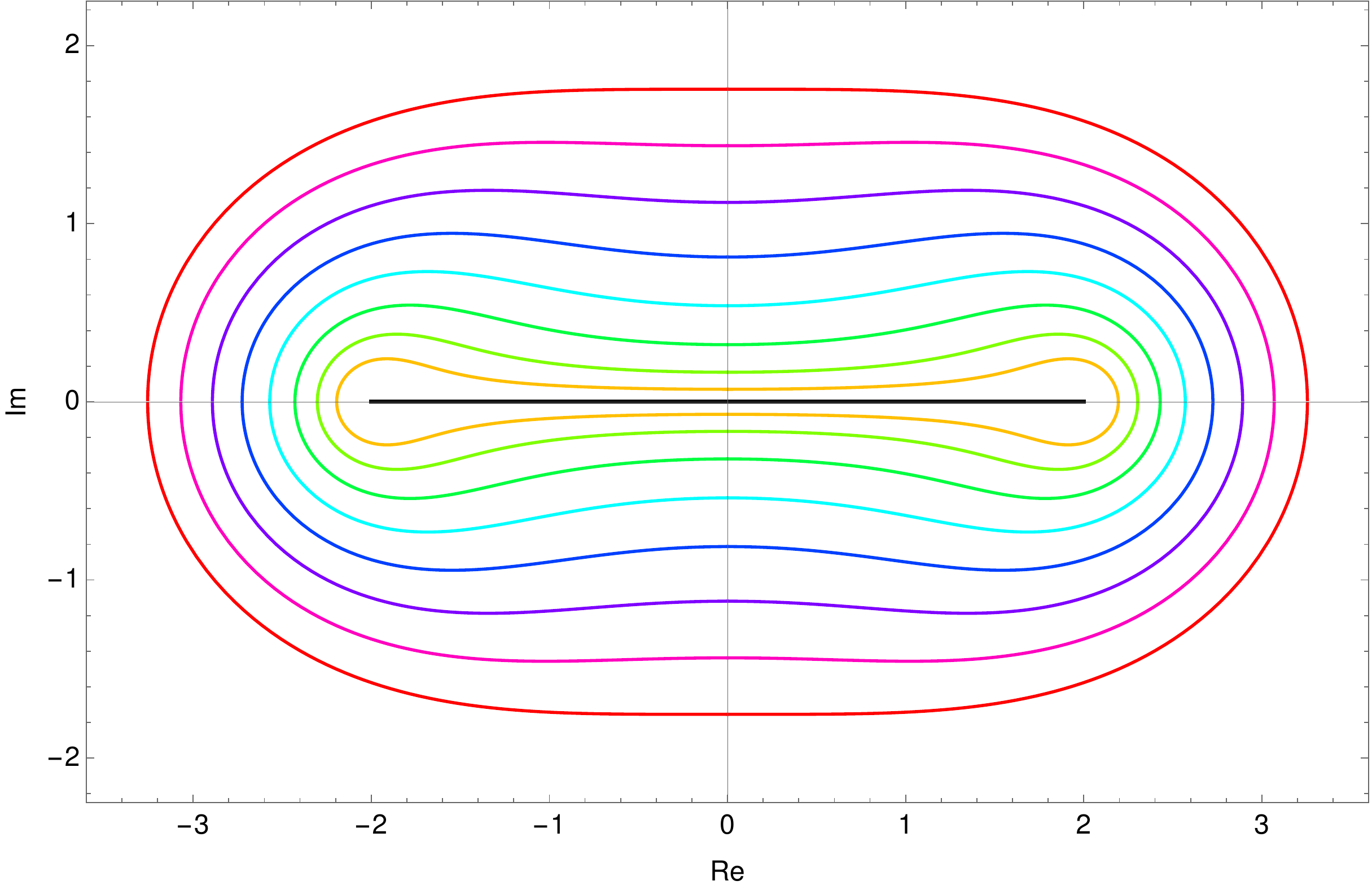}
	\caption{$q=3$}
    \end{subfigure}
    \vskip6pt
    \begin{subfigure}[c]{0.49\textwidth}
	\includegraphics[width=\textwidth]{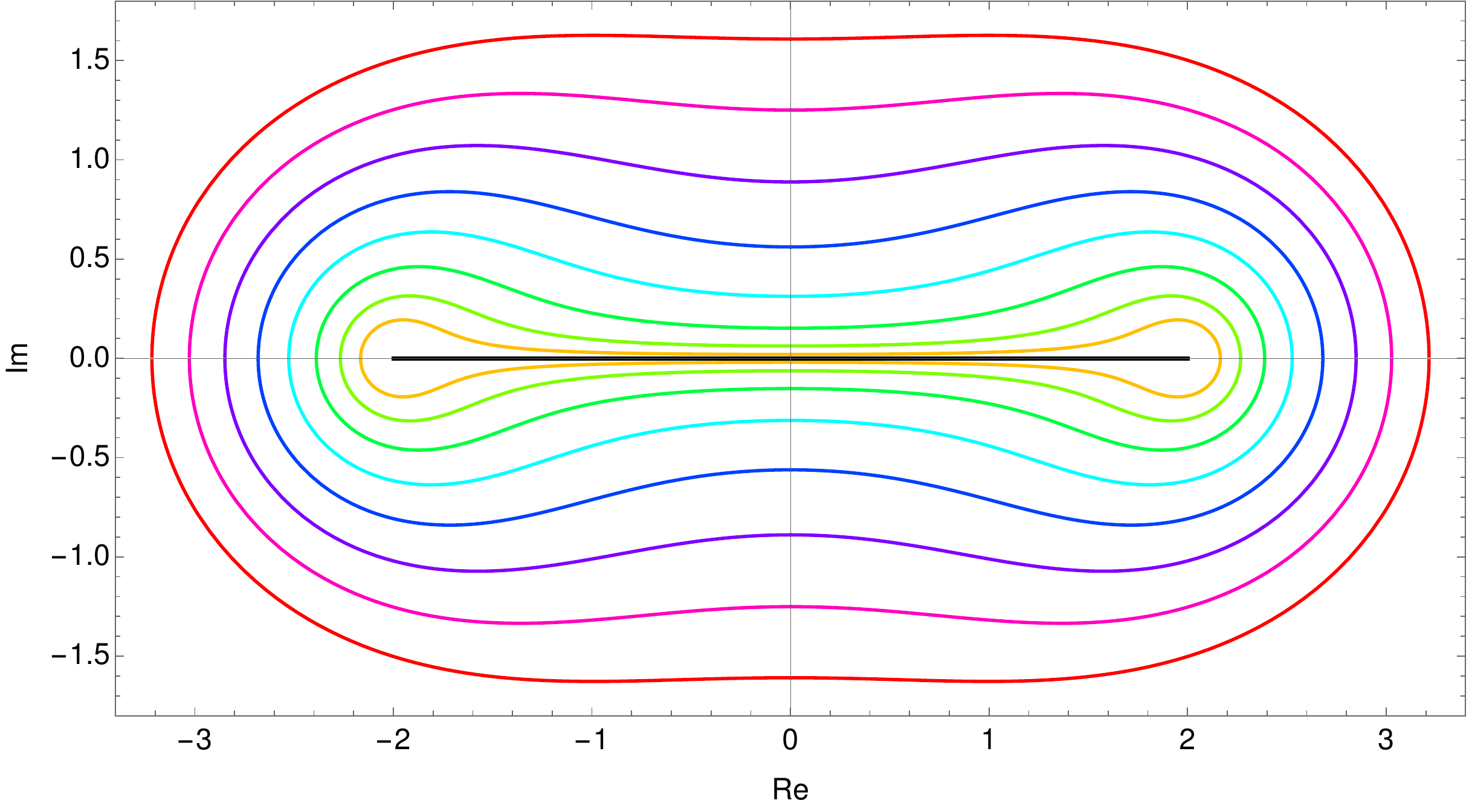}
	\caption{$q=4$}
    \end{subfigure}
    \begin{subfigure}[c]{0.49\textwidth}
	\includegraphics[width=\textwidth]{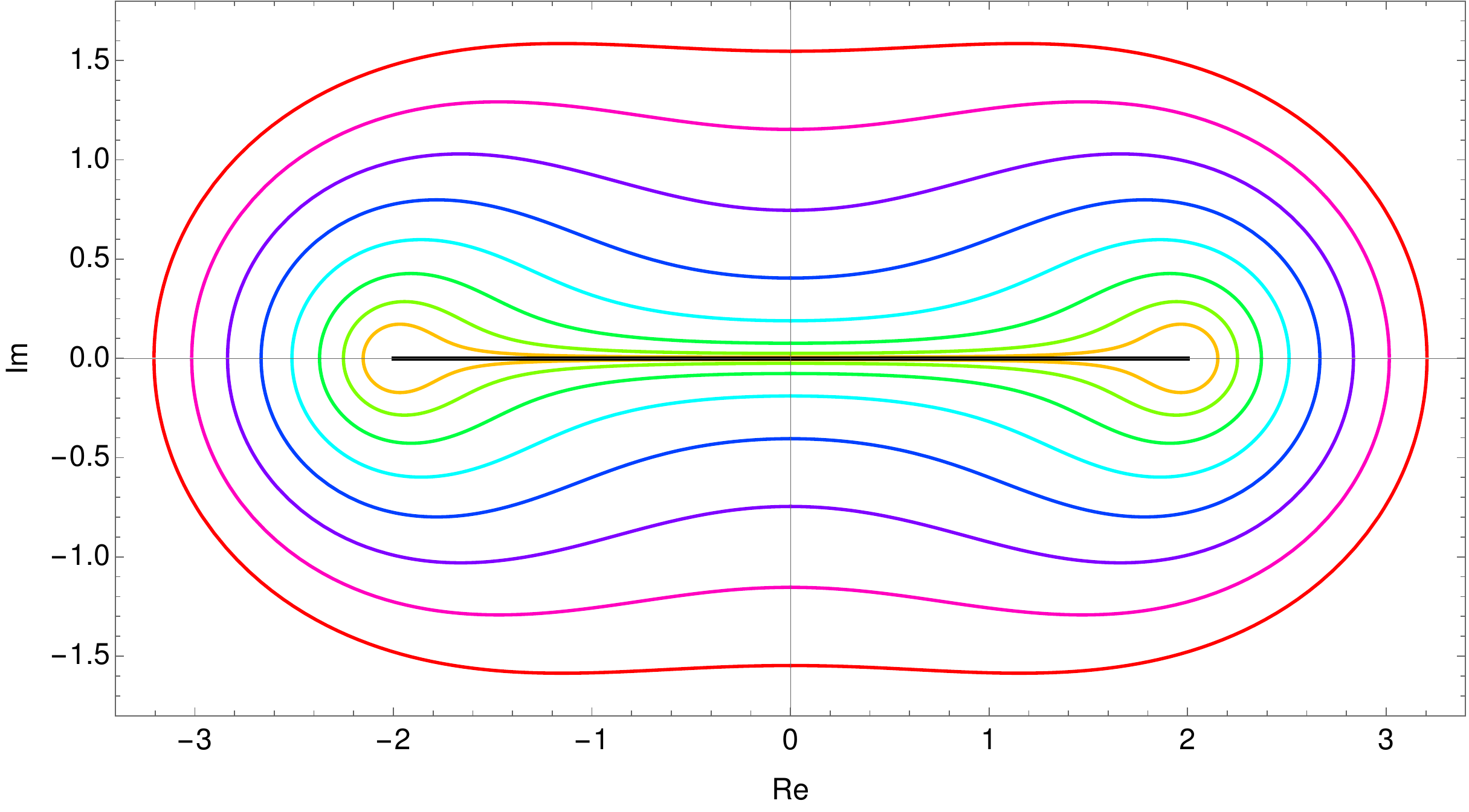}
	\caption{$q=5$}
    \end{subfigure}
    \caption{\small{The plots of the expanding boundary curves 
			corresponding to the spectral enclosure 
			\eqref{eq:p-norm} for six choices of the parameter~$q$ and $\|\ups\|_{\ell^{p}(\ZZ)}=j/4$, $j=3,\ldots,10$.}}
    \label{fig:sp_thm2}
\end{figure}

\begin{figure}[H]
    \centering
    \begin{subfigure}[c]{0.49\textwidth}
	\includegraphics[width=\textwidth]{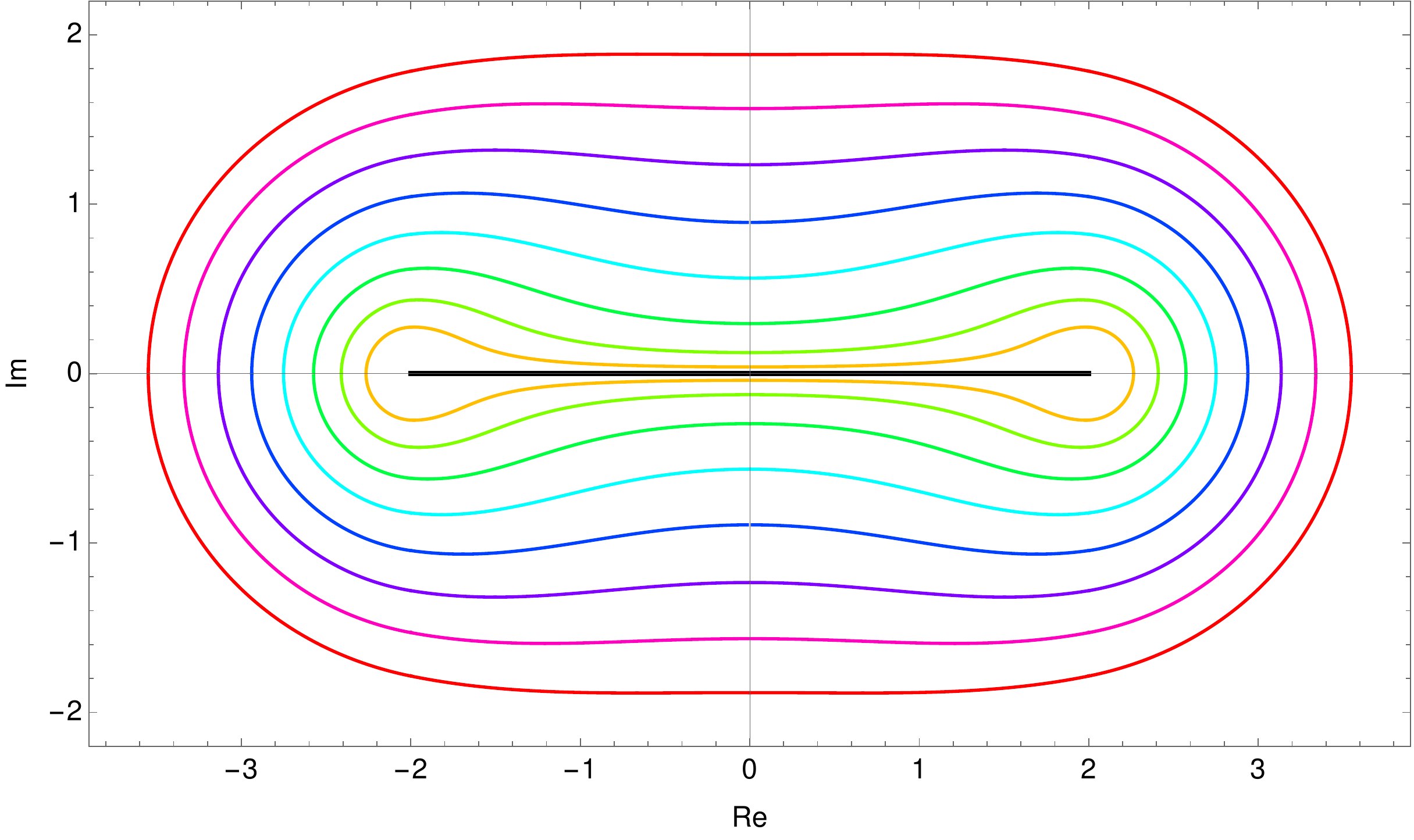}
    \caption{$p=4/3$}
    \end{subfigure}
    \begin{subfigure}[c]{0.49\textwidth}
	\includegraphics[width=\textwidth]{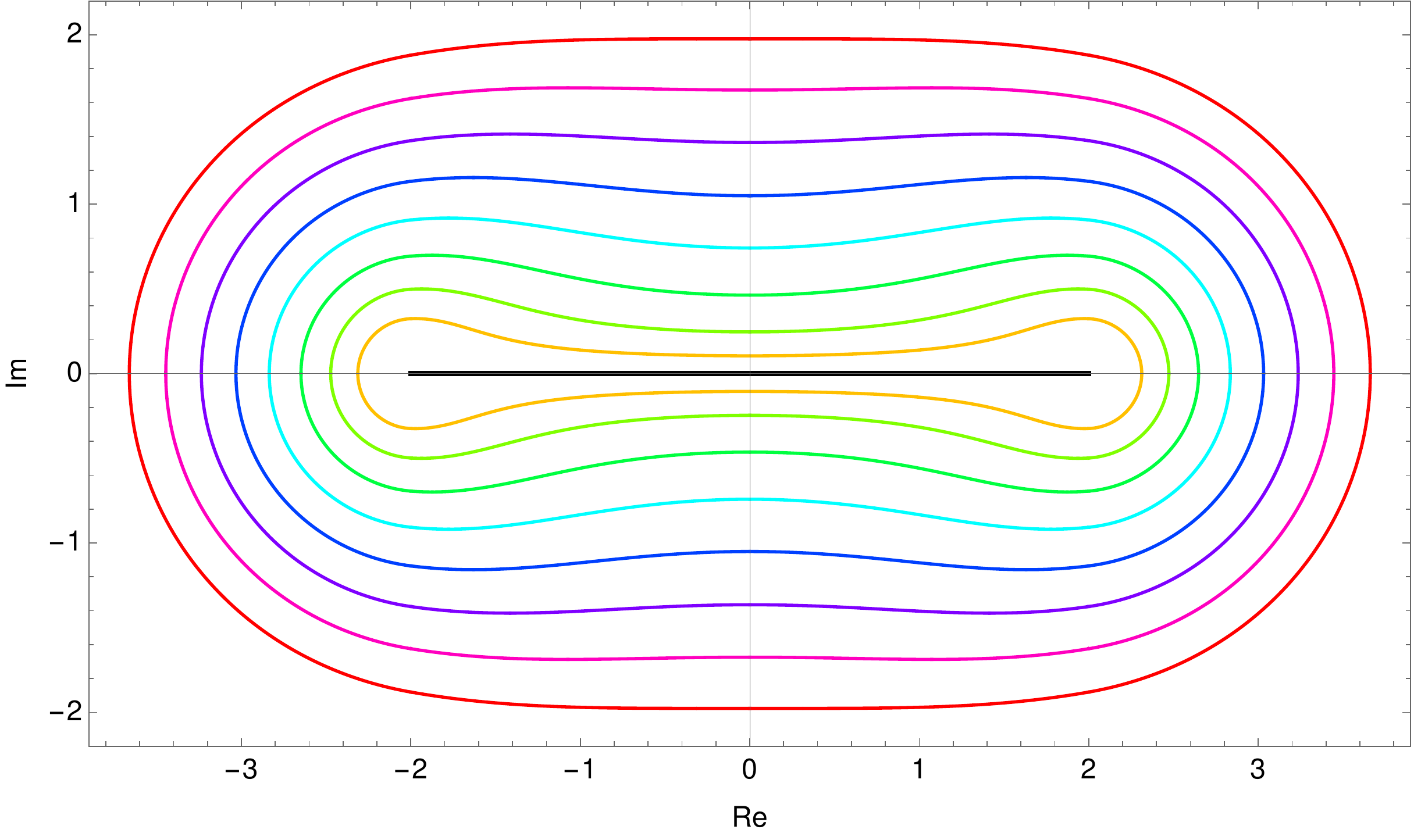}
	\caption{$p=3/2$}
    \end{subfigure}
    \vskip6pt
    \begin{subfigure}[c]{0.49\textwidth}
	\includegraphics[width=\textwidth]{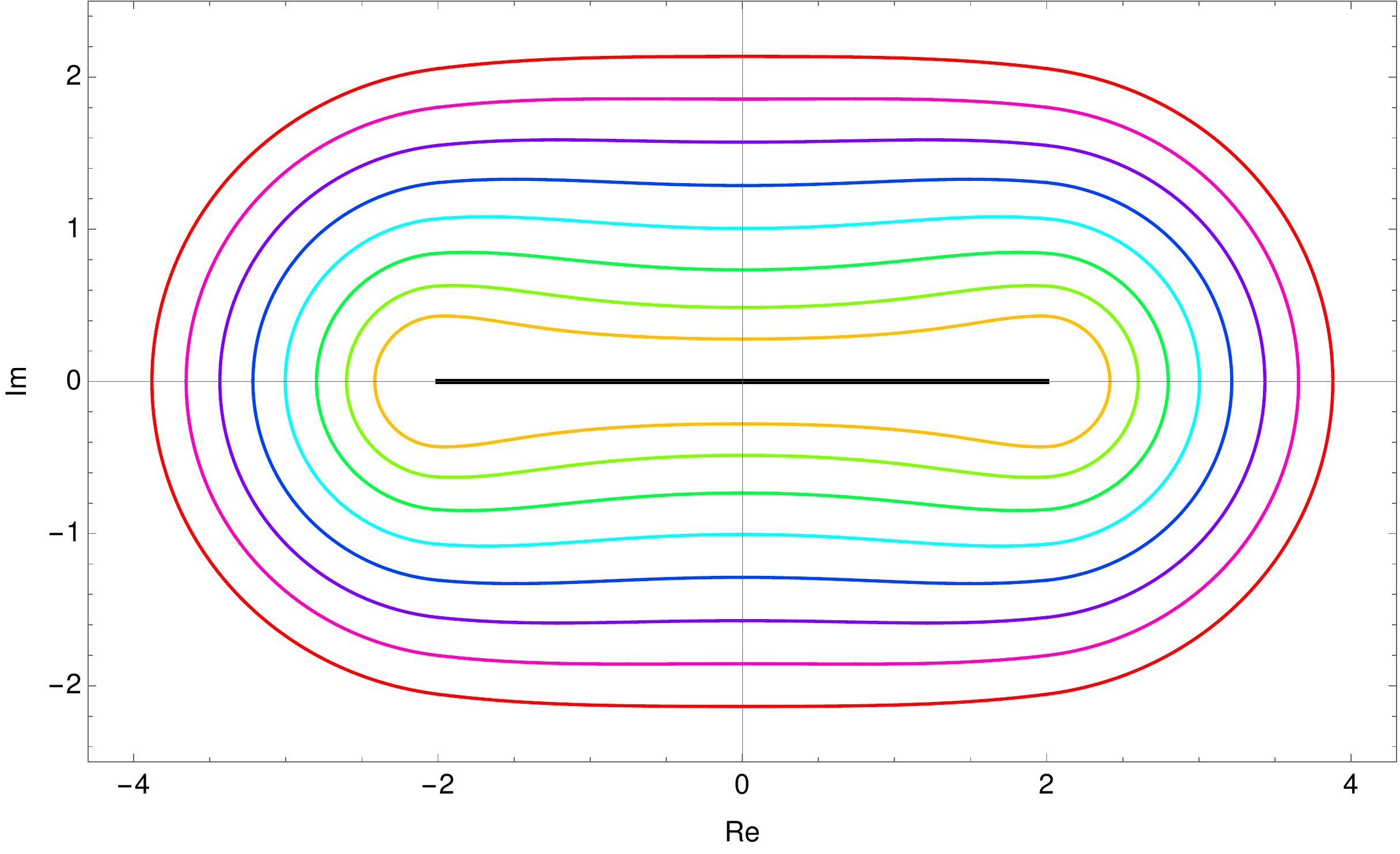}
	\caption{$p=2$}
    \end{subfigure}
    \begin{subfigure}[c]{0.49\textwidth}
	\includegraphics[width=\textwidth]{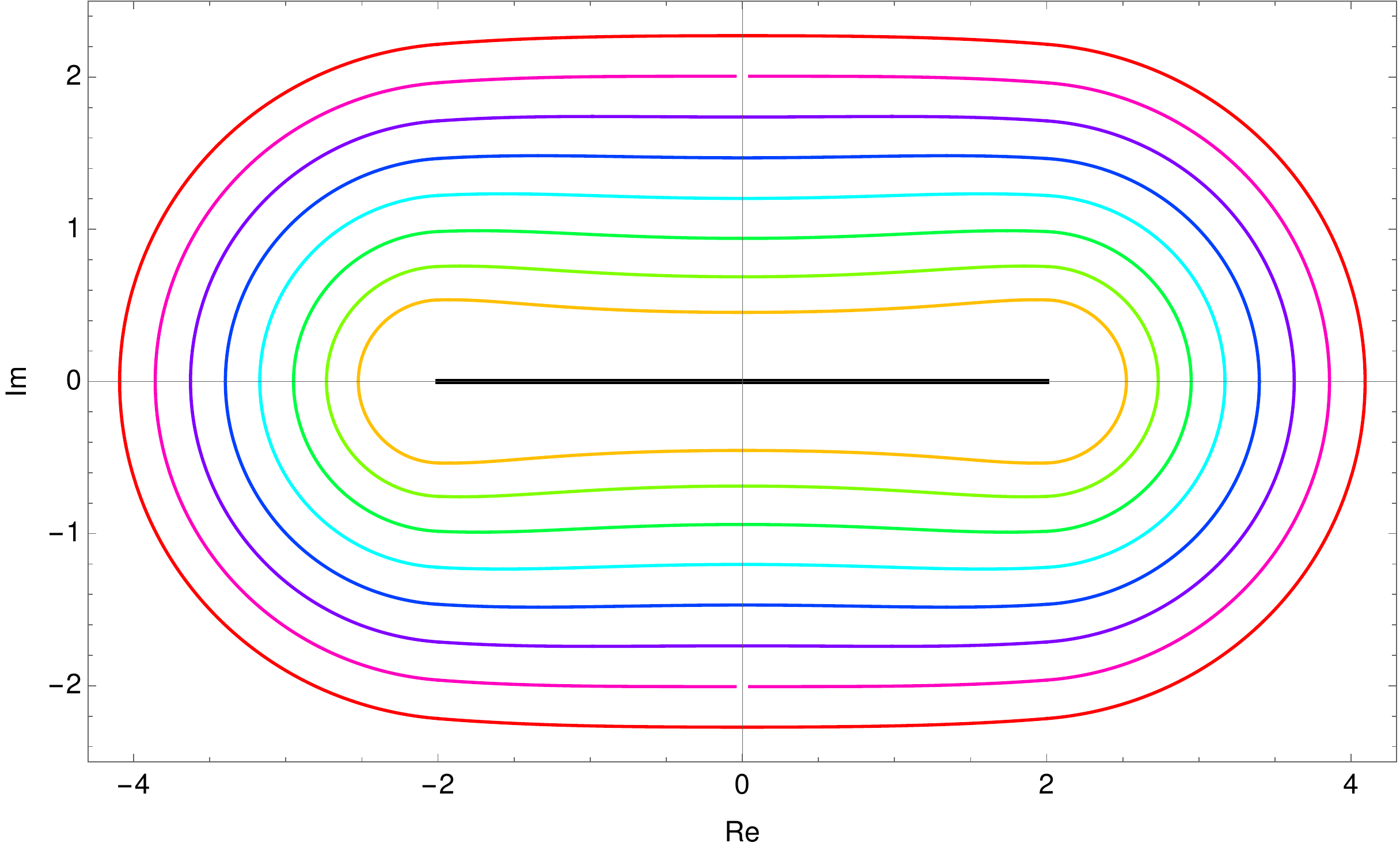}
	\caption{$p=3$}
    \end{subfigure}
    \vskip6pt
    \begin{subfigure}[c]{0.49\textwidth}
	\includegraphics[width=\textwidth]{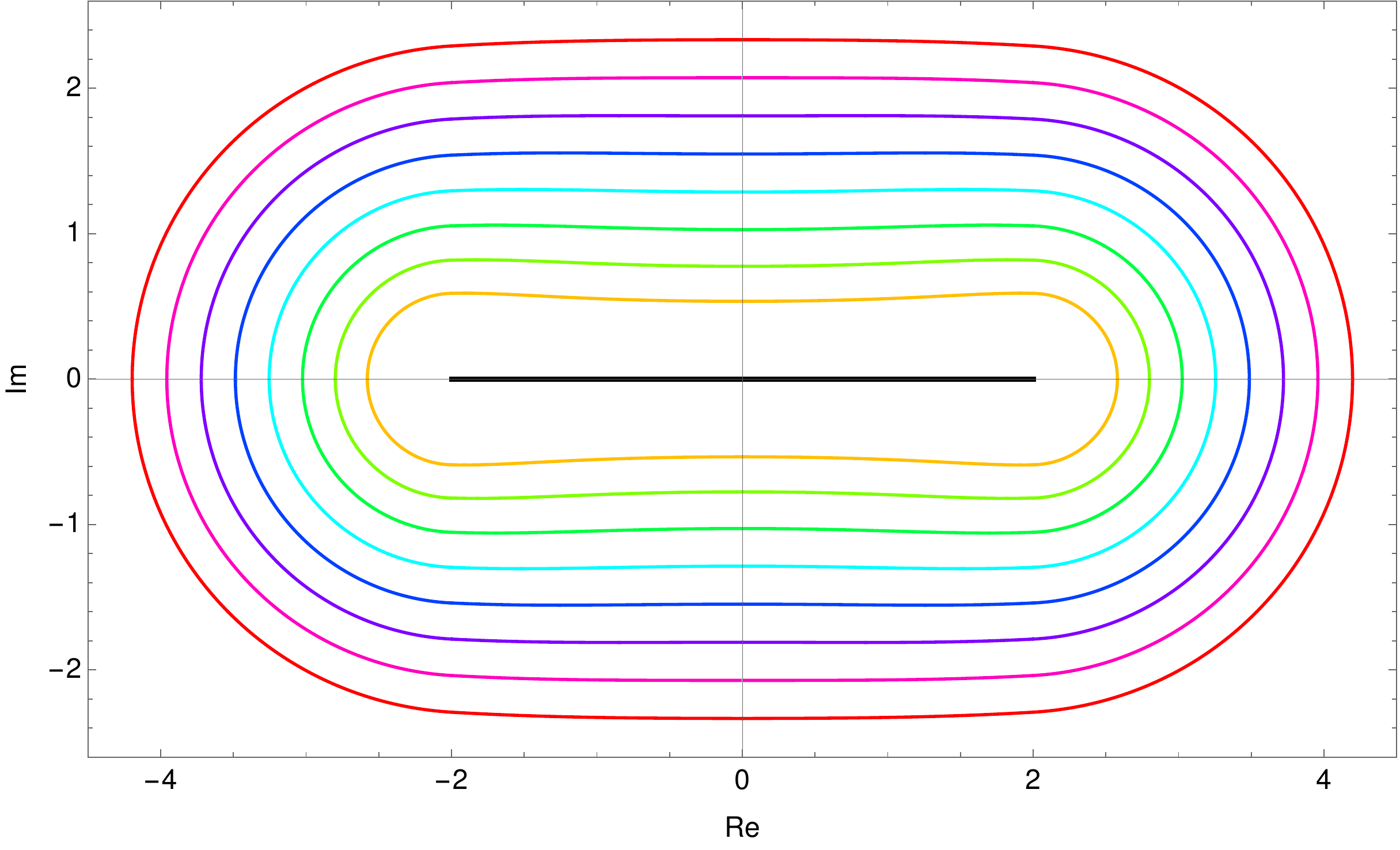}
	\caption{$p=4$}
    \end{subfigure}
    \begin{subfigure}[c]{0.49\textwidth}
	\includegraphics[width=\textwidth]{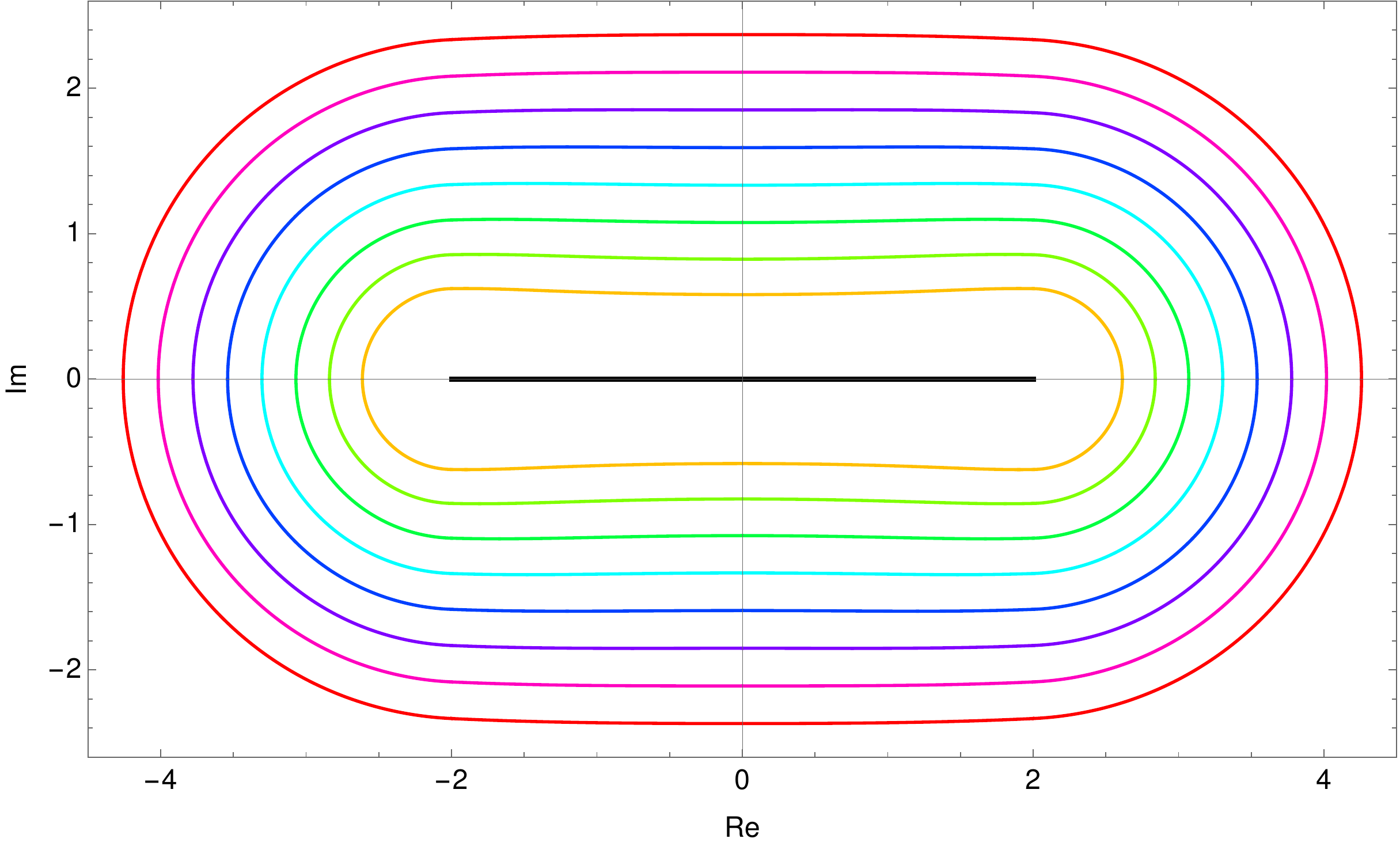}
	\caption{$p=5$}
    \end{subfigure}
    \caption{\small{The plots of the expanding boundary curves 
			corresponding to the spectral enclosure 
			\eqref{eq:p-norm.2} for six choices of the parameter~$p$ and $\|\ups\|_{\ell^{p}(\ZZ)}=j/4$, $j=3,\ldots,10$.}}
    \label{fig:sp_thm3}
\end{figure}

\begin{figure}[H]
    \centering
    \begin{subfigure}[c]{0.49\textwidth}
	\includegraphics[width=\textwidth]{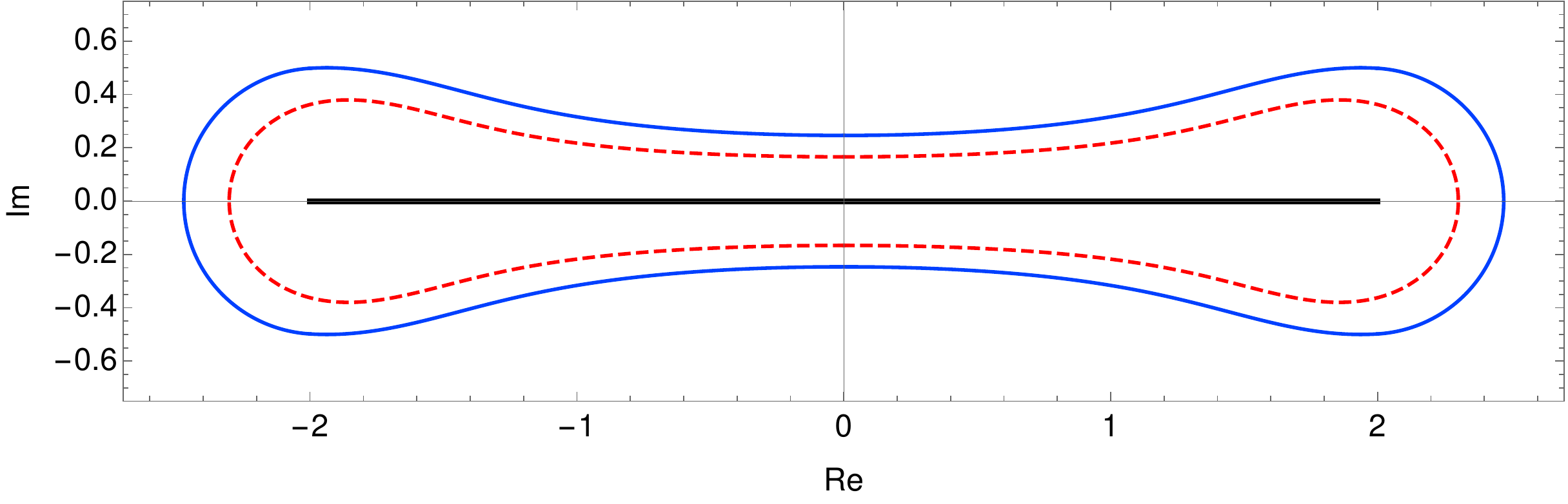}
    \caption{$p=3/2$}
    \end{subfigure}
    \begin{subfigure}[c]{0.49\textwidth}
	\includegraphics[width=\textwidth]{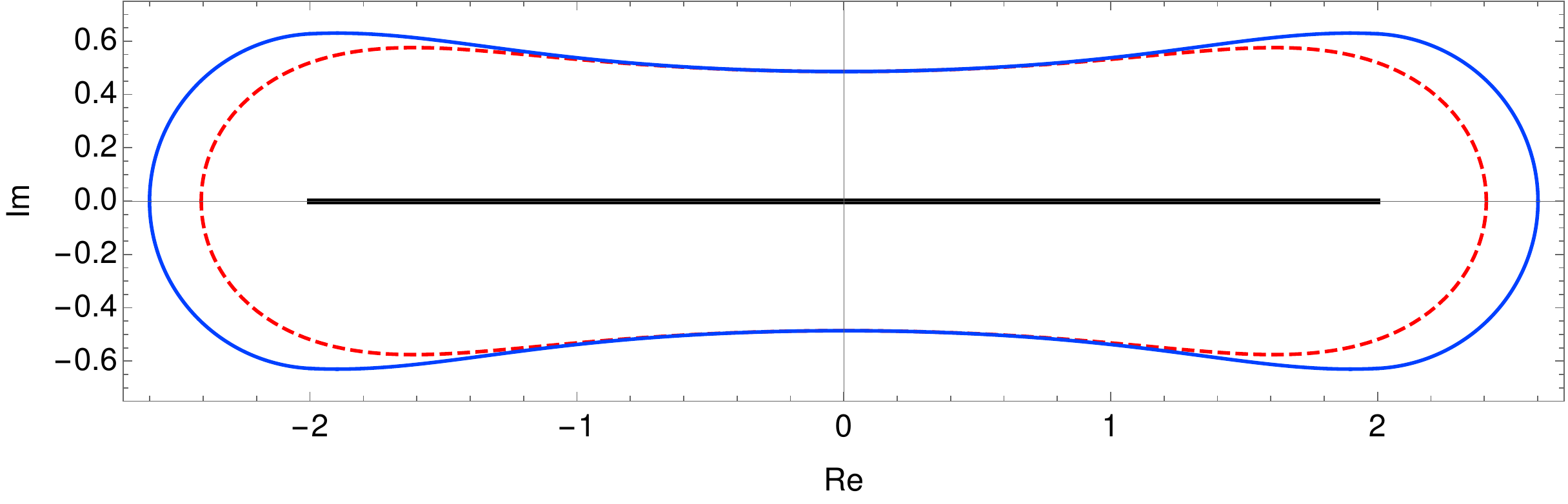}
	\caption{$p=2$}
    \end{subfigure}
\end{figure}
\begin{figure}[H]\ContinuedFloat
    \vskip6pt
    \begin{subfigure}[c]{0.49\textwidth}
	\includegraphics[width=\textwidth]{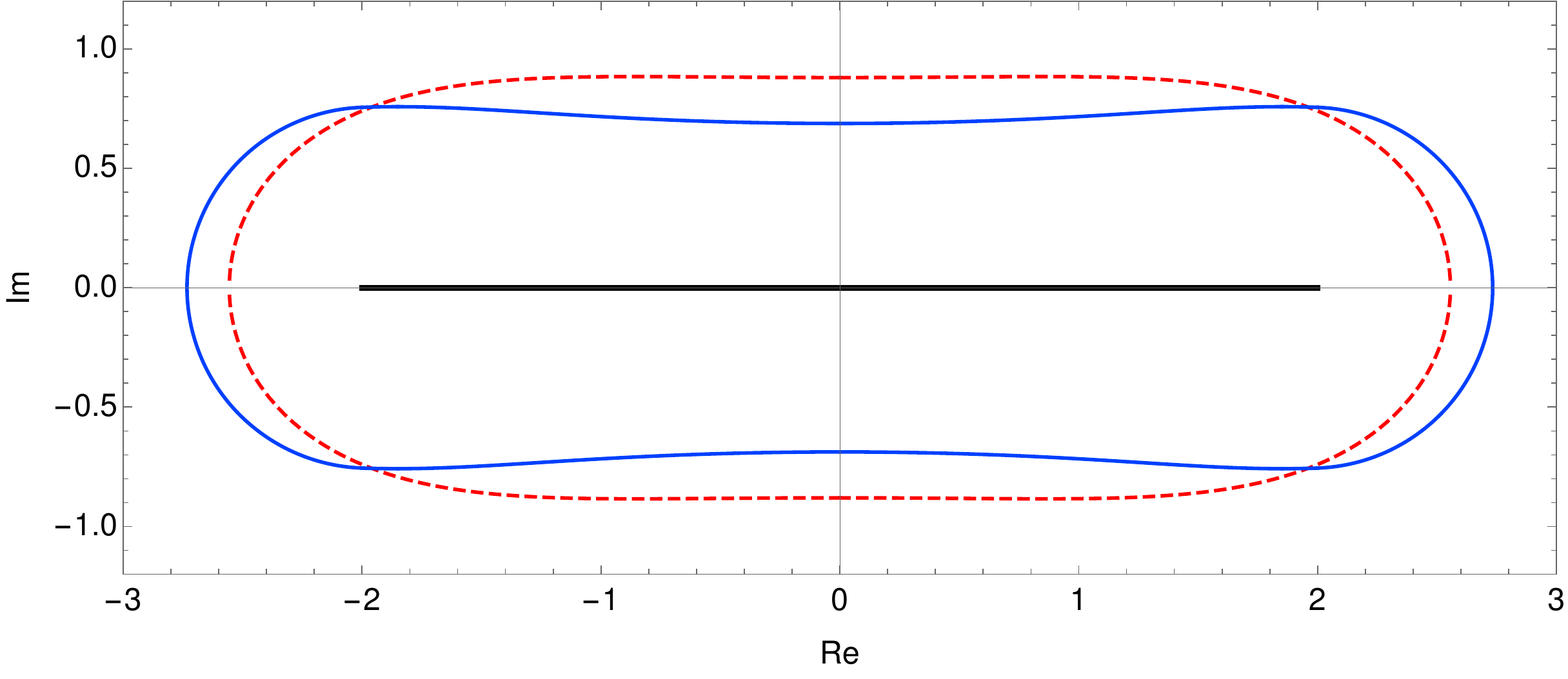}
	\caption{$p=3$}
    \end{subfigure}
    \begin{subfigure}[c]{0.49\textwidth}
	\includegraphics[width=\textwidth]{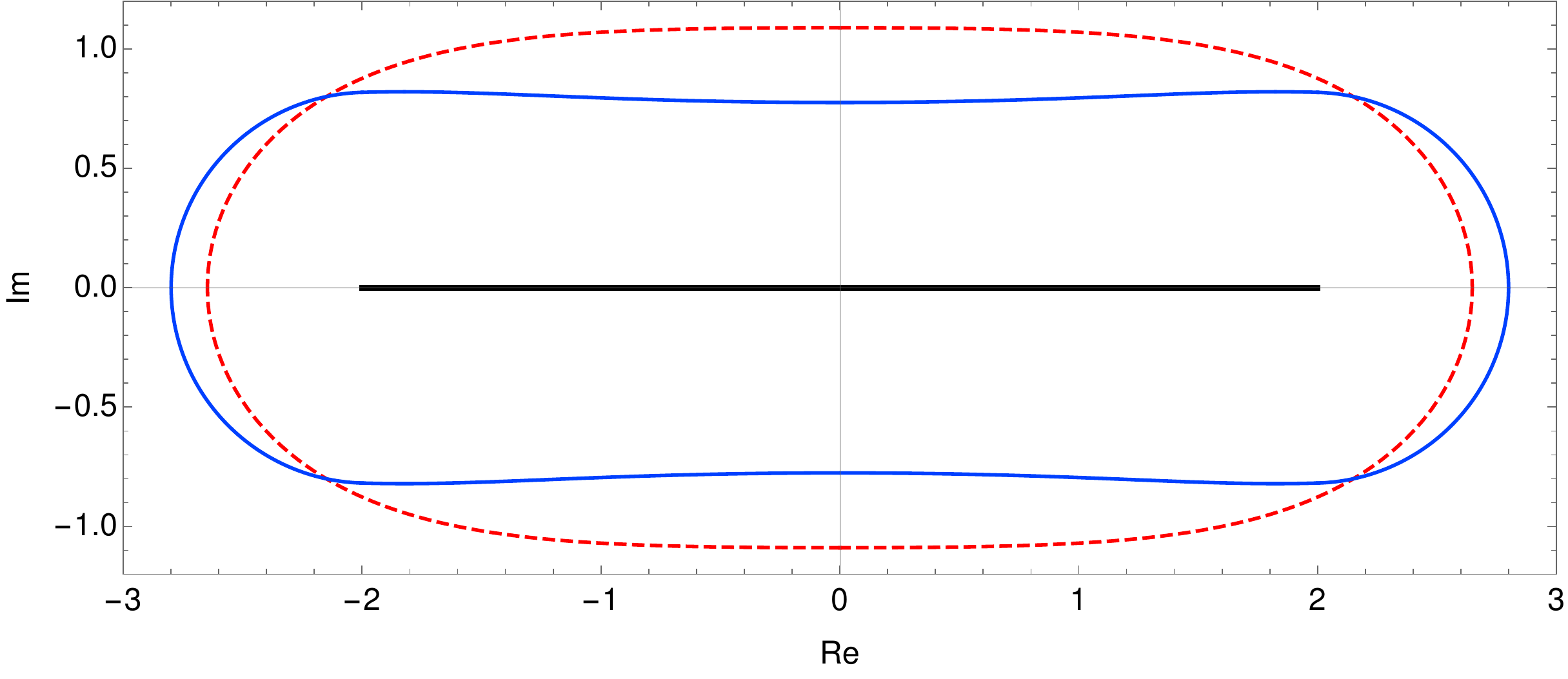}
	\caption{$p=4$}
    \end{subfigure}
    \vskip6pt
    \begin{subfigure}[c]{0.49\textwidth}
	\includegraphics[width=\textwidth]{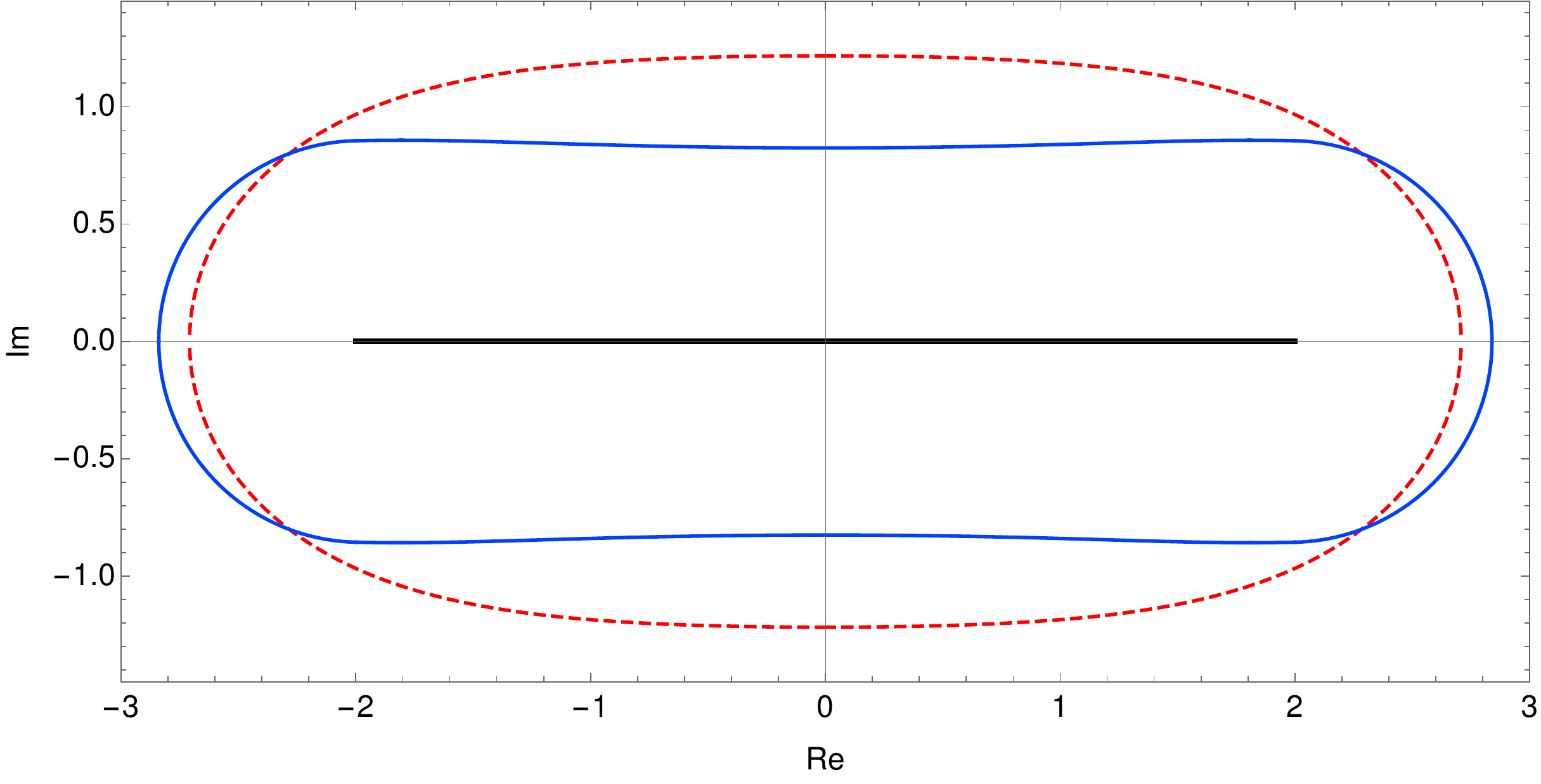}
	\caption{$p=5$}
    \end{subfigure}
    \begin{subfigure}[c]{0.49\textwidth}
	\includegraphics[width=\textwidth]{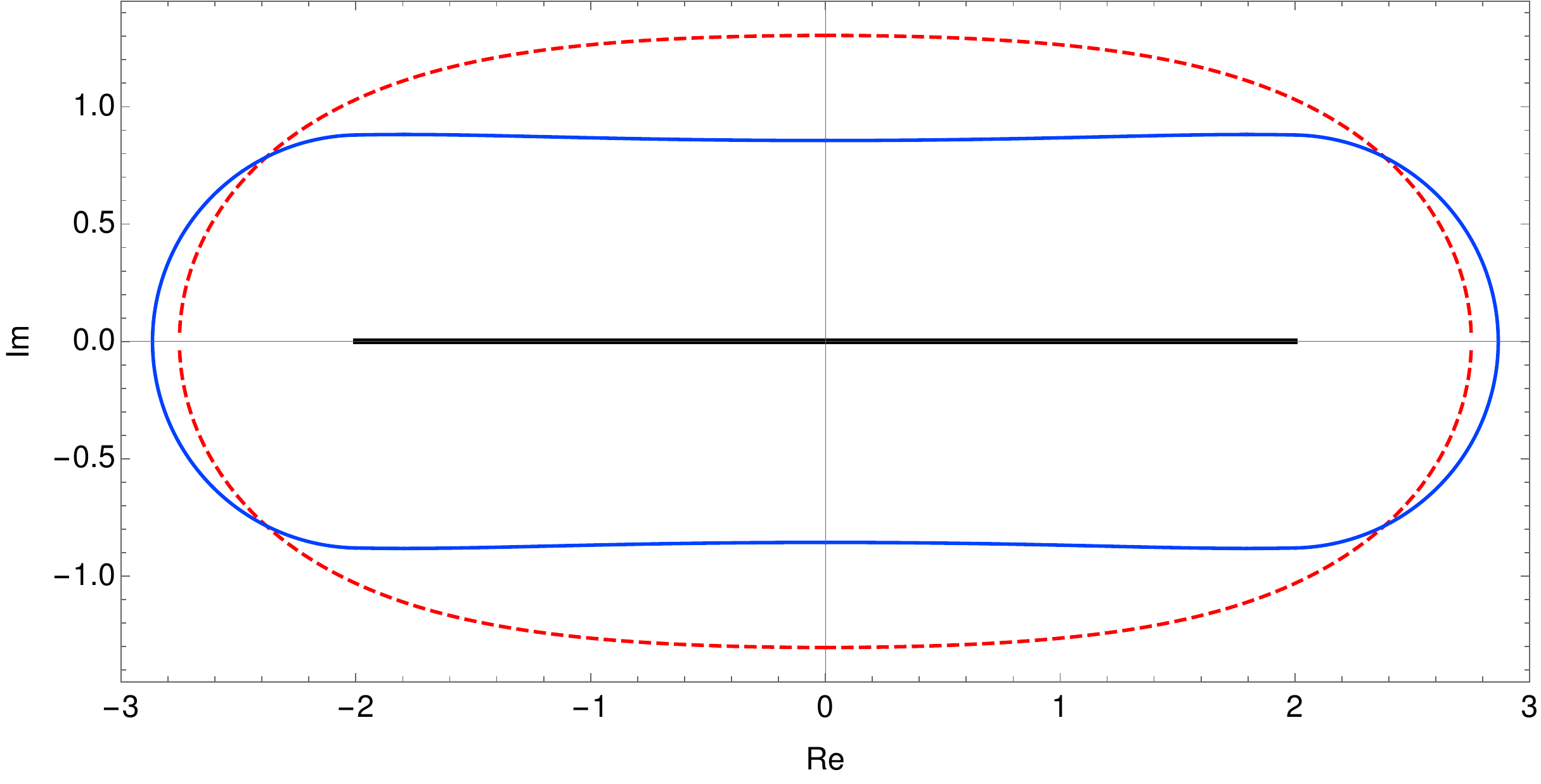}
	\caption{$p=6$}
    \end{subfigure}
    \caption{\small{A comparison of the boundary curves of the spectral enclosure~
			\eqref{eq:p-norm} (red dashed line) with the spectral enclosure~\eqref{eq:p-norm.2} (blue solid line) for six choices of the parameter~$p$ and $\|\ups\|_{\ell^{p}(\ZZ)}=1$.}}
    \label{fig:compar}
\end{figure}

\section*{Acknowledgement}
\noindent
The first author thanks Prof.~David Krej\v{c}i\v{r}{\'\i}k for stimulating discussions. The second author acknowledges financial support by the Ministry of Education, Youth and Sports of the Czech Republic project no. CZ.02.1.01/0.0/0.0/16\_019/0000778.


\bibliographystyle{amsplain}
\bibliography{bib_disc_schr}

\end{document}